\theoremstyle{plain}
\newtheorem*{notation}{Notation}
\DeclareMathOperator{\Ann}{Ann}
\DeclareMathOperator{\Aut}{Aut}
\DeclareMathOperator{\B}{B}
\DeclareMathOperator{\C}{C}
\DeclareMathOperator{\Ext}{Ext}
\DeclareMathOperator{\Fix}{Fix}
\DeclareMathOperator{\Ho}{H}
\DeclareMathOperator{\Id}{id}
\DeclareMathOperator{\im}{im}
\DeclareMathOperator{\Ker}{ker}
\DeclareMathOperator{\Z}{Z}
\title{Skew braces and Rota--Baxter operators on semi-direct products}
\authors{Pragya Belwal and Mahender Singh}
\abstract{This paper examines the connections between (relative) Rota--Baxter groups, skew left braces, and enlargements of these structures on naturally associated semi-direct products. Given a skew left brace, we define a new skew left brace, referred to as its square, on the natural semi-direct product of its additive and multiplicative groups. Further, the square construction is distinct from the previously known double construction arising as a special case of matched pairs of skew braces. This provides a method to construct a new bijective, non-degenerate solution to the Yang--Baxter equation from an existing solution arising from a skew left brace. We show that the square construction is functorial and integrates naturally into both the cohomological and extension-theoretic frameworks for (relative) Rota--Baxter groups and skew left braces. Furthermore, we provide a sufficient condition under which two isoclinic skew left braces yield isoclinic squares.}
\keywords{Cohomology, extension, isoclinic skew brace, relative Rota--Baxter group, Rota--Baxter group.}
\begin{document}
	
	\section{Introduction}
	
	The program for classifying set-theoretic solutions to the Yang--Baxter equation was proposed by Drinfel'd in the 1990s~\cite{MR1183474}. A key result by Guarnieri and Vendramin~\cite{MR3647970} established that every skew left brace yields a~non-degenerate bijective set-theoretic solution to the Yang--Baxter equation. Recently, Guo, Lang, and Sheng~\cite{MR4271483} introduced Rota--Baxter operators on Lie groups as a~means of producing such operators on the corresponding Lie algebras. The study of Rota--Baxter operators on more general groups, beyond the Lie group setting, was undertaken by Bardakov and Gubarev~\cite{MR4370524, MR4556953}, who proved that every Rota--Baxter operator on a~group induces a~skew left brace structure on that group. This result links Rota--Baxter operators on groups directly to Drinfel'd's program. At the same time, Caranti and Stefanello~\cite{MR4531710} gave a~cohomological characterization of when a~skew left brace can be induced from a~Rota--Baxter group. The idea of Rota--Baxter operators was further generalized by Jiang, Sheng, and Zhu~\cite{MR4704155} to relative Rota--Baxter operators on Lie groups. Building on this, it was proved in~\cite{MR4857550} that every skew left brace is induced by a~relative Rota--Baxter group, and that there is an isomorphism between the categories of bijective relative Rota--Baxter groups and skew left braces. In~\cite{MR4705761}, Bai et al. further connected relative Rota--Baxter groups to Butcher groups, post-groups, and pre-groups. As another generalization, Catino et al.~\cite{MR4412793, MR4551911} introduced Rota--Baxter operators on Clifford semigroups and established a~correspondence between such operators and weak braces, the latter being known to produce set-theoretic solutions to the Yang--Baxter equation that may be degenerate.
	
	A cohomology theory for relative Rota--Baxter operators on Lie groups was introduced in~\cite{MR4704155}. However, this theory is not applicable to abstract groups, as it is defined in terms of the cohomology of the descendent group with coefficients in the Lie algebra of the acting Lie group, viewed as a~module over the descendent group. To address this limitation, a~cohomology and extension theory for relative Rota--Baxter groups was developed in~\cite{MR4819014}, enabling the classification of such extensions via the second cohomology group. The relationship between the cohomology of relative Rota--Baxter groups and that of their associated skew left braces was also investigated. These constructions were subsequently applied in~\cite{MR4755474} to define the Schur multiplier and Schur covers of relative Rota--Baxter groups, and their connections to the corresponding constructions for skew left braces were explored.
	
	Given the central role of skew left braces in classifying set-theoretic solutions to the Yang--Baxter equation, developing new methods for their construction is of fundamental importance. In this paper, we define a~novel construction, called the square of a~skew left brace, which produces a~skew left brace from a~given one. This construction is defined on the natural semi-direct product of the additive and multiplicative groups of the given skew left brace. The idea is motivated by an observation by Bardakov et al.~\cite{MR4868162} that every relative Rota--Baxter group gives rise to a~Rota--Baxter operator on the natural semi-direct product of its constituent groups. We observe that our square construction is distinct from the previously known double construction of a~skew left brace due to Smoktunowicz and Vendramin~\cite{MR3763907}. This provides a~method to construct a~new bijective, non-degenerate solution to the Yang--Baxter equation from an existing solution arising from a~skew left brace. Further, we show that the square construction is functorial and naturally integrates into both the cohomological and extension-theoretic frameworks for (relative) Rota--Baxter groups and skew left braces.
	
	The paper is organised as follows. In Section~\ref{section prelim}, we recall some necessary background from the extension and cohomology theory of skew left braces, Rota--Baxter groups and relative Rota--Baxter groups. In Section~\ref{section RRB to RRB on SDP}, we prove that the construction of the Rota--Baxter group on the associated semi-direct product arising from a~given relative Rota--Baxter group is functorial (Proposition~\ref{RRB to RB functor}), and yields a~homomorphism from the second cohomology of the relative Rota--Baxter group to that of the induced Rota--Baxter group (Proposition~\ref{cohom RRB to cohom RB}). In Section~\ref{section SLB to SLB on SDP}, we give the construction of the square of a~skew left brace, and illustrate how it differs from its double. We prove that the construction of the square of a~skew left brace is functorial (Proposition~\ref{sb iso to isb iso}), and that there is a~homomorphism from the second cohomology of the given skew left brace to that of its square (Proposition~\ref{homology homo SLB}). In Section~\ref{section comm diag homol groups}, we prove that, for each skew left brace, the homomorphisms of cohomology groups defined in the preceding sections give rise to a~commutative diagram of cohomology groups of (relative) Rota--Baxter groups, skew left braces, and their squares (Theorem~\ref{theorem comm diag homol groups}). Finally, in Section~\ref{section isoclinism of squares}, we provide a~sufficient condition under which two isoclinic skew left braces yield isoclinic squares (Theorem~\ref{theorem isoclinicsdp}).
	
	\section{Preliminaries}\label{section prelim}
	In this section, we recall some essential results on abelian extensions of skew left braces and (relative) Rota--Baxter groups.
	
	\subsection{Abelian extensions of skew left braces}
	We begin by recalling some basic ideas about abelian extensions of skew left braces.
	
	\begin{definition}
		A skew left brace is a~triple $(H,\cdot,\circ)$, where $(H,\cdot)$ and $(H, \circ)$ are groups such that
		\[
		a \circ (b \cdot c)=(a\circ b) \cdot a^{-1} \cdot (a \circ c)
		\]
		for all $a,b,c \in H$, where $a^{-1}$ denotes the inverse of $a$ in $(H, \cdot)$. The groups $(H,\cdot)$ and $(H, \circ)$ are called the additive and the multiplicative groups of the skew left brace $(H, \cdot, \circ)$, respectively. When the additive group is abelian, then $(H,\cdot,\circ)$ is simply called a~left brace.
	\end{definition}
	
	Any group can be viewed as a~skew left brace with the same additive and the multiplicative group, called the \textit{trivial skew left brace}. Given a~skew left brace $(H,\cdot,\circ)$, there is an associated action $\lambda^H:(H,\circ)\rightarrow \Aut(H,\cdot)$, which is a~group homomorphism defined by
	\[
	\lambda^H_a(b)=a^{-1}\cdot(a\circ b)
	\]
	for all $a, b \in H$. The associated action plays a~key role in understanding skew left braces.
	\begin{notation}
		For simplicity in notation, we make the following conventions:
		\begin{enumerate}
			\item We denote the additive and the multiplicative groups of a~skew left brace $(H, \cdot, \circ)$ by $H ^{(\cdot)}$ and $H ^{(\circ)}$, respectively.
			\item When there is no ambiguity, we write the group operation in $H ^{(\cdot)}$ simply as $ab$.
			\item We denote the inverse of an element $g$ in $H ^{(\cdot)}$ by $g^{-1}$, and the inverse of $g$ in $H ^{(\circ)}$ by $g^\dagger$.
			\item When the context is clear, we will not differentiate between the notations for the additive and the multiplicative groups of different skew left braces.
			\item We will sometimes denote the image $\phi(x)$ of a~homomorphism $\phi$ by $\phi_x$.
			\item We denote by $\textbf{1}$ the trivial skew left brace for which both the underlying groups are trivial.
		\end{enumerate}
	\end{notation}
	
	Next, we introduce homomorphisms, ideals, and extensions of skew left braces.
	
	\begin{definition}
		Let $(H, \cdot_H, \circ_H )$ and $(K, \cdot_K, \circ_K )$ be skew left braces. A map $f: H \rightarrow K$ is called a~homomorphism of skew left braces if
		\[
		f(a \cdot_H b)=f(a) \cdot_K f(b) \quad \textrm{and} \quad f(a \circ_H b)=f(a) \circ_K f(b)
		\]
		for all $a, b \in H$.
	\end{definition}
	
	\begin{definition}
		Let $(H,\cdot,\circ)$ be a~skew left brace. A normal subgroup $I$ of both $(H,\cdot)$ and $(H,\circ)$ is said to be an ideal of $(H,\cdot,\circ)$ if $\lambda^H_h (I)\subseteq I$ for all $h\in H$.
	\end{definition}
	
	\begin{definition}
		Let $(I, \cdot)$ be an abelian group viewed as a~trivial brace and $(M, \cdot,\circ)$ be a~skew left brace. An (abelian) extension of $(M, \cdot,\circ)$ by $(I,\cdot)$ is a~skew left brace $(E,\cdot,\circ)$ that fits into the sequence
		\[
		\mathcal{E} : \quad \quad {\bf 1} \to (I,\cdot) \stackrel{i}{\to}  (E,\cdot,\circ) \stackrel{\pi}{\to} (M,\cdot,\circ) \to {\bf 1}
		\]
		where $i$ and $\pi$ are homomorphisms of skew left braces such that
		$i$ is injective, $\pi$ is surjective and $\im(i)=\ker(\pi)$.
	\end{definition}
	
	For simplicity, we denote $i(y)$ by $y$. By a~(normalised) set-theoretic section to $\mathcal{E}$, we mean a~map $s: H \rightarrow E$ such that $\pi \, s= \Id_{H}$ and $s(1)=1$. The equivalence of extensions of skew left braces is defined analogously to the equivalence of extensions of groups and other algebraic structures. 	We recall essential results on extensions of skew left braces by abelian groups from~\cite{MR4604853} (see also~\cite{MR3530867}).
	
	Let $(M, \cdot, \circ)$ be a~skew left brace and
	\[
	\mathcal{E}: \quad \quad {\bf 1} \longrightarrow (I, \cdot) \stackrel{i}{\longrightarrow}  (E,\cdot,\circ) \stackrel{\pi}{\longrightarrow} (M,\cdot,\circ) \longrightarrow {\bf 1}
	\]
	be an extension of $(M, \cdot, \circ)$ by the trivial brace $(I, \cdot)$. Let $s: M \rightarrow E$ be a~set-theoretic section to $\mathcal{E}$. We define maps $\xi, 	\epsilon: M^{(\circ)} \rightarrow \Aut(I)$ and $\zeta:M^{(\cdot)} \rightarrow \Aut(I)$ by
	\begin{eqnarray}
		\xi_m (y) & = & \lambda^E_{s(m)}(y),\label{action1 sb}\\
		\zeta_m (y) & = & s(m)^{-1} \cdot y \cdot s(m)~\label{action2 sb}~\textrm{and}\\
		\epsilon_m (y) & = & s(m)^{\dagger} \circ y \circ s(m)\label{action3 sb}
	\end{eqnarray}
	for $m \in M$ and $y \in I$, where $x^{-1}$ and $x^\dagger$ denote the inverse of $x$ in $E^{(\cdot)}$ and $E^{(\circ)}$, respectively. It is not difficult to see that the map $\xi$ is a~homomorphism, whereas the maps $\zeta$ and $\epsilon $ are anti-homomorphisms. Furthermore, these maps are independent of the choice of the set-theoretic section~\cite[Proposition 3.4]{MR4604853}. The triplet $(\xi, \zeta, \epsilon)$ is called the \emph{associated action} of the extension $\mathcal{E}$.
	
	Next, recall the definition of the second cohomology group of a~skew left brace $(M, \cdot, \circ)$ with coefficients in an abelian group $(I, \cdot)$ viewed as a~trivial brace. Let $\xi: M ^{(\circ)} \rightarrow \Aut (I)$ be a~homomorphism and $\zeta: M^{(\cdot)} \rightarrow \Aut(I)$ and $\epsilon: M^{(\circ)} \rightarrow \Aut(I)$ be anti-homo\-morphisms satisfying the conditions
	\begin{eqnarray*}
		\xi_{m_1 \cdot m_2}(\epsilon_{m_1 \cdot m_2}(y)) \,\zeta_{m_2}(y) & = & \zeta_{m_2}(\xi_{ m_1 }(\epsilon_{m_1}(y))) \, \xi_{ m_2} (\epsilon_{m_2}(y)) \textrm{and}\\
		\zeta_{m^{-1}_1 \cdot (m_1 \circ m_2 )}(\xi_{m_1}(y)) & =& \xi_{m_1}(\zeta_{m_2}(y))
	\end{eqnarray*}
	for all $m_1, m_2 \in M$ and $y \in I$. Such a~triplet $(\xi, \zeta, \epsilon)$ is referred to as a~\emph{good triplet} of action of $M$ on $I$.
	
	Let $g,f: M \times M \rightarrow I$ be maps satisfying
	\begin{gather}
		g(m_2, m_3 ) \, g(m_1 \cdot m_2, m_3 )^{-1} \, g(m_1, m_2 \cdot m_3 ) \, \zeta_{m_3}(g(m_1, m_2 ))^{-1} = 1, \label{sbcocycle1}
	\end{gather}
	\begin{align}
		\xi_{m_1}(f(m_2, m_3 )){ }&{ }f(m_1 \circ m_2, m_3 )^{-1} \nonumber \\ &{ }f(m_1, m_2 \circ m_3 ) \, \xi_{m_1 \circ m_2 \circ m_3} \, (\epsilon_{m_3}(\xi^{-1}_{m_1 \circ m_2}f(m_1, m_2 )))^{-1} = 1, \label{sbcocycle2}
	\end{align}
	\begin{align}
		\xi_{m_1}(g(m_2, m_3 )){ }&{ }\zeta_{m_1 \circ m_3} (g(m_1, m^{-1}_1 )) \nonumber \\
		&{ } \zeta_{m_1 \circ m_3} (g(m_1 \circ m_2, m^{-1}_1 ))^{-1} \, g((m_1 \circ m_2 ) m^{-1}_1, m_1 \circ m_3 )^{-1} \nonumber \\
		&{ } \zeta_{- m_1\cdot (m_1 \circ m_3 )} \, (f(m_1, m_2 ))^{-1} \, f(m_1, m_2 \cdot m_3 ) \, f(m_1, m_3 )^{-1} = 1, \label{sbcocycle3}
	\end{align}
	for all $m_1, m_2, m_3 \in M$. Let $\Z_{SB}^2 (M, I)$ be the set formed by pairs $(g,f)$ of functions $g,f: M \times M \rightarrow I$ that satisfy~\eqref{sbcocycle1},~\eqref{sbcocycle2},~\eqref{sbcocycle3} and vanish on degenerate tuples, and let $\B_{SB}^2 (M, I)$ be the set of pairs $(g, f) \in \Z_{SB}^2 (M, I)$ such that there exists a~map $\theta:M \to I$ satisfying
	\begin{eqnarray}
		g(m_1, m_2 ) &=&\theta(m_1\cdot m_2 )^{-1} \zeta_{m_2}(\theta(m_1 ))\theta(m_2 ) \label{boundary con sb1} \textrm{and}\\
		f(m_1, m_2 ) &=& \theta(m_1\circ m_2 )^{-1}\xi_{m_1\circ m_2} (\epsilon_{m_2}(\xi^{-1}_{m_1}(\theta(m_1 ))))\xi_{m_1}(\theta(m_2 )) \label{boundary con sb2}
	\end{eqnarray}
	for all $m_1, m_2 \in M$. Then the second cohomology group of $(M, \cdot, \circ)$ with coefficients in $I$ corresponding to the given good triplet of actions $(\xi, \zeta, \epsilon)$ is defined as
	\[
	\Ho^2_{SB}(M, I) =  \Z_{SB}^2(M, I)/\B_{SB}^2(M, I).
	\]
	
	Let $\Ext_{(\xi, \zeta, \epsilon)}(M, I)$ denote the set of equivalence classes of those skew left brace extensions of $M$ by $I$ whose corresponding triplet of actions is $(\xi, \zeta, \epsilon)$. Then the following result holds~\cite[Theorem A]{MR4604853}.
	
	\begin{theorem}\label{gbij-thm sb}
		Let $(M, \cdot, \circ)$ be a~skew left brace and $(I, \cdot)$ an abelian group viewed as a~trivial brace. Then there is a~bijection $\Upsilon:\Ext_{(\xi, \zeta, \epsilon)}(M, I) \rightarrow \Ho^2_{SB}(M, I)$ given by $\Upsilon([\mathcal{E}])=[\tau_1, \tau_2 ]$, where
		\begin{eqnarray}
			\tau_1 (m_1, m_2 ) &= & s(m_1 \cdot m_2 )^{-1} \cdot s(m_1 ) \cdot s(m_2 )\label{tau sb}~\textrm{and}\\
			\tau_2 (m_1, m_2 ) &=& s(m_1 \circ m_2 )^{-1} \cdot \big(s(m_1 ) \circ s(m_2 )\big)\label{tildetau sb}
		\end{eqnarray}
		for all $m_1, m_2 \in M$ and $s$ is a~set-theoretic section to $\mathcal{E}$.
	\end{theorem}
	
	\subsection{Abelian extensions of Rota--Baxter groups}\label{RB cohomology}
	Next, we recall some necessary results on Rota--Baxter groups from~\cite[Section 3, 4]{MR4644858}.
	
	\begin{definition}
		Let $(G, \cdot)$ be a~group. A map $R: G \rightarrow G$ is called a~Rota--Baxter operator of weight $1$ on $G$ if
		\[
		R(x) \cdot R(y)= R \big(x \cdot R(x) \cdot y \cdot R(x)^{-1} \big),
		\]
		for all $x, y \in G$. 	A group $(G, \cdot)$ equipped with a~Rota--Baxter operator of weight $1$ is called a~Rota--Baxter group, and is denoted by the pair $(G,R)$.
	\end{definition}
	
	The following result connects Rota--Baxter groups to skew left braces~\cite[Proposition~3.1]{MR4370524}.
	
	\begin{proposition}
		Let $(G,\cdot)$ be a~group and $R: G \to G$ be a~Rota--Baxter operator. If we define $x \circ_R y = x \cdot R(x) \cdot y \cdot R(x)^{-1}$, then $(G, \cdot, \circ_R )$ is a~skew left brace.
	\end{proposition}
	
	Note that, if the skew left brace $(G, \cdot, \circ_R )$ is induced by a~Rota--Baxter operator $R: G \to G$, then the associated action is given by
	\begin{equation}\label{RB induced SLB associated action}
		\lambda^G_x (y)= R(x) \cdot y \cdot R(x)^{-1},
	\end{equation}
	which is simply the conjugation action by the image of $R$.
	
	\begin{definition}
		Let $(G, R_{G})$ and $(H, R_{H})$ be Rota--Baxter groups. A map $\phi: G \rightarrow H$ is called a~homomorphism of Rota--Baxter groups if $\phi$ is a~group homomorphism satisfying
		\[
		\phi \, R_{G}=  R_{H} \, \phi.
		\]
	\end{definition}
	
	\begin{definition}
		Let $(I, R_I )$ and $(H, R_H )$ be Rota--Baxter groups. A Rota--Baxter extension of $(H, R_H )$ by $(I, R_I )$ is a~Rota--Baxter group $(E,R_E )$ that fits into the sequence
		\[
		\mathcal{E}: \quad \quad 1 \to (I,R_I) \stackrel{i}{\to}  (E,R_E) \stackrel{\pi}{\to} (H,R_H) \to 1,
		\]
		where $i$ and $\pi$ are homomorphisms of Rota--Baxter groups such that $i$ is injective, $\pi$ is surjective and $\im (i)=\ker(\pi)$.
	\end{definition}
	
	For simplicity, we denote $i(y)$ by $y$, which implies that $R_{E}$ restricted to $I$ is $R_I$. The equivalence of extensions of Rota--Baxter groups is defined in a~manner analogous to the standard notion of equivalence for extensions of groups and skew braces.
	
	\begin{remark}
		A direct check shows that an extension of Rota--Baxter groups induces an extension of induced skew left braces.
	\end{remark}
	
	Let $\mathcal{E}: 1 \to (I,R_I ) \stackrel{i}{\to} (E,R_E ) \stackrel{\pi}{\to} (H,R_H ) \to 1$ be a~Rota--Baxter extension of $(H, R_H )$ by $(I, R_I )$, where $I$ is an abelian group. By a~(normalised) set-theoretic section to $\mathcal{E}$, we mean a~map $s: H \rightarrow E$ such that $\pi \,s= \Id_{H}$ and $s(1)=1$. Let $\gamma:H \rightarrow \Aut(I)$ be the map defined by
	\[
	\gamma_h (y) = s(h)^{-1}y s(h).
	\]
	Note that $ \gamma$ is an anti-homomorphism, and is independent of the choice of a~set-theoretic section. We call $\gamma$ the \textit{associated action} of the extension $\mathcal{E}$. It is not difficult to see that equivalent extensions of Rota--Baxter groups have the identical associated actions. For each $a\in E$, there exists unique $h\in H$ and $y\in I$ such that $a=s(h)y$. Hence, we can write
	\begin{equation}
		R_E \big(s(h)\big)=s \big(R_H (h)\big)y_h
	\end{equation}
	for some unique $y_h\in I$. Consider the maps $\tau\colon H\times H\to I$ and $r\colon H\to I$ given by
	\[
	\tau(h_1,h_2) = s(h_1h_2)^{-1}s(h_1)s(h_2)
	\quad \text{and} \quad
	r(h) = y_h
	\]
	for $h_1, h_2, h \in H$.
	
	\begin{definition}
		Let $(H, R_H )$ be a~Rota--Baxter group, let $I$ be an abelian group, and let $R_I: I \rightarrow I$ be a~group homomorphism. We say that $(I, R_I )$ is a~right $(H, R_H )$-module if $I$ is a~right $H$-module by an action $\gamma:H \rightarrow \Aut(I)$ and the condition
		\[
		\gamma_{R_H(h)} \big(R_I(z) \big)= R_I\big(\gamma_{ h R_H( h)}(z  +R_I(z))  -\gamma_{R_H(h)}(R_I(z))\big)
		\]
		holds for all $h \in H$ and $z \in I$.
	\end{definition}
	
	Let $(I, R_I )$ be an $(H, R_H)$-module by an action $\gamma$. Let $C^n (H, I)$ be the set of all maps $H^n \to I$ that vanish on all degenerate tuples. Define:
	\begin{gather*}
		TC^1_{RB}(H, I) = C^1 (H, I), \quad
		TC^2_{RB}(H, I) = C^2 (H, I) \oplus C^{1}(H, I), \\
		TC^3_{RB}(H, I)= C^3 (H, I) \oplus C^{2}(H, I).
	\end{gather*}
	Define $\partial^1_{RB}: TC^1_{RB}(H, I) \rightarrow TC^2_{RB}(H, I) $ by
	\[
	\partial^1_{RB}(\theta) = \big(\delta^1 (\theta),  -\Phi^1(\theta)\big),
	\]
	where $\Phi^1 (\theta)(h)=R_I \big( \gamma_{R_H (h)}(\theta(h)) \big) -\theta \big(R_H (h) \big)$ and $\delta^1$ is the standard 1-coboundary map defining the group cohomology of $H$ with coefficients in $I$ (see~\cite[Section 3.1]{MR0672956}).
	
	Similarly, define $\partial^2_{RB}: TC^2_{RB}(H, I) \rightarrow TC^3_{RB}(H, I)$ by
	\begin{align*}
		\partial^2_{RB}(f, g)=(\delta^2 f,\beta),
	\end{align*}
	where
	\begin{align*}
		\beta(h_1,h_2 )&=\partial^1 (g)(h_1,h_2 )-R_I\big(\gamma_{R_H (h_2 )}(\gamma_{h_2}(g(h_1 ))-g(h_1 ))\big)-\Phi^2 (f)(h_1,h_2 ),\\
		\partial^1 (g)(h_1,h_2 )&=g(h_2 )-g(h_1\circ_{R_H} h_2 )+\gamma_{R_H (h_2 )}(g(h_1 )),\\
		\Phi^2 (f)(h_1,h_2 )&=f(R_H (h_1 ),R_H (h_2 ))- R_I\left(\gamma_{R_H (h_1\circ_{R_H} h_2 )}(f(h_1 R_H (h_1 ),h_2 R_H (h_1 )^{-1}))\right.\\
		&\ \ \left.+\gamma_{h_2 R_H (h_1 )^{-1}}(f(h_1,R_H (h_1 )))+f(h_2,R_H (h_1 )^{-1})-f(R_H (h_1 ),R_H (h_1 )^{-1})\right),
	\end{align*}
	and $\delta^2$ is the standard 2-coboundary map defining the group cohomology of $H$ with coefficients in $I$ (see~\cite[Section 3.1]{MR0672956}). Let
	\[
	\B_{RB}^2(H, I) =  \im(\partial^1_{RB}) \quad \textrm{and}\quad \Z_{RB}^2(H, I) =  \ker(\partial^2_{RB}).
	\]
	Then we define the second cohomology of $(H, R_H )$ with coefficients in $(I, R_I )$ by
	\[
	\Ho^2_{RB}(H, I)=\Z_{RB}^2(H, I)/  \B^2_{RB}(H, I).
	\]
	With the preceding set-up, the following result holds~\cite[Theorem 4.4]{MR4644858}.
	
	\begin{theorem}\label{extnrb}
		Let $(H,R_H )$ be a~Rota--Baxter group, $(I,R_I )$ an $(H,R_H )-$module and $\Ext_\gamma(H,I)$ be the set of equivalence classes of all extensions of $(H,R_H )$ by $(I,R_I )$ inducing the action $\gamma$. Then the function $\Lambda:\Ext_{\gamma}(H, I) \rightarrow \Ho^2_{RB}(H, I)$ given by $\Lambda([\mathcal{E}])=[\tau, r]$ is a~bijection, such that
		\[
		\tau(h_1, h_2) = s(h_1  h_2)^{-1}  s(h_1)  s(h_2)
		\quad \text{and} \quad
		r(h) = s \big(R_H(h)\big)^{-1}R_E \big(s(h)\big)
		\]
		for all $h_1, h_2, h \in H$	and $s$ is a~set-theoretic section to $\mathcal{E}$.
	\end{theorem}
	
	\subsection{Abelian extensions of relative Rota--Baxter groups}\label{cohomology RRB}
	Finally, we recall essential results on extension theory of relative Rota--Baxter groups from~\cite{MR4819014}.
	
	\begin{definition}
		A relative Rota--Baxter group is a~quadruple $(H, G, \phi, R)$, where $H$ and $G$ are groups, $\phi: G \rightarrow \Aut(H)$ a~group homomorphism and $R: H \rightarrow G$ is a~map satisfying the condition
		\[
		R(h_1) R(h_2)=R\big(h_1 \phi_{R(h_1)}(h_2) \big),
		\]
		for all $h_1, h_2 \in H$. The map $R$ is referred to as the relative Rota--Baxter operator on $H$. The relative Rota--Baxter group $(H, G, \phi, R)$ is called trivial if $\phi: G \to \Aut(H)$ is the trivial homomorphism.
	\end{definition}
	
	\begin{remark}\label{RRB is RB}
		Let $\phi: G \to \Aut(G)$ be the adjoint action, that is, $\phi_y (x) = yxy^{-1}$ for $x, y \in G$. Then the relative Rota--Baxter group $(G, G,\phi, R)$ is simply the Rota--Baxter group $(G, R)$.
	\end{remark}
	
	The following result connects relative Rota--Baxter groups to skew left braces~\cite[Proposition 3.6]{MR4857550}.
	
	\begin{proposition}
		If $(H, G, \phi, R)$ is a~relative Rota--Baxter group, then $(H, \cdot, \circ_R )$ is a~skew left brace, where $\cdot$ denotes the group operation on $H$ and $\circ_R$ is defined by
		\[
		h_1 \circ_R h_2 = h_1\cdot \phi_{R(h_1)}(h_2)
		\]
		for $h_1, h_2 \in H$.
	\end{proposition}
	
	In the converse direction, it is known from~\cite[Theorem 3.10]{MR4857550} that if $(H,\cdot,\circ)$ is a~skew left brace, then $(H^{(\cdot)}, H^{(\circ)}, \lambda^H, \Id_H)$ is a~relative Rota--Baxter group.
	
	\begin{definition}
		Let $(H, G, \phi, R)$ and $(K, L, \varphi, S)$ be two relative Rota--Baxter groups. A pair $(f_1, f_2 ): (H, G, \phi, R) \to (K, L, \varphi, S)$ is called a~homomorphism of relative Rota--Baxter groups if $f_1: H \rightarrow K$ and $f_2: G \rightarrow L$ are group homomorphisms such that
		\begin{equation}
			f_2 \; R = S \; f_1 \quad \textrm{and} \quad f_1 \; \phi_g = \varphi_{f_2 (g)}\; f_1
		\end{equation}
		for all $g \in G$.
	\end{definition}
	
	We write $\textbf{1}$ to denote the trivial relative Rota--Baxter group for which both the underlying groups and the maps are trivial.
	
	\begin{definition}
		Let $(K,L, \alpha,S )$ and $(A,B, \beta, T)$ be relative Rota--Baxter groups. An extension of $(A,B, \beta, T)$ by $(K,L, \alpha,S )$ is a~relative Rota--Baxter group $(H,G, \phi, R)$ that fits into the sequence
		\[
		\mathcal{E} : \quad \quad  {\bf 1} \longrightarrow (K,L, \alpha,S ) \stackrel{(i_1, i_2)}{\longrightarrow}  (H,G, \phi, R) \stackrel{(\pi_1, \pi_2)}{\longrightarrow} (A,B, \beta, T) \longrightarrow {\bf 1},
		\]
		where $(i_1, i_2 )$ and $(\pi_1, \pi_2 )$ are homomorphisms of relative Rota--Baxter groups such that $(i_1, i_2 )$ is an embedding, $(\pi_1, \pi_2 )$ is an epimorphism of relative Rota--Baxter groups and $(\im(i_1 ), \im(i_2 ), \phi|, R|)= (\Ker(\pi_1 ), \Ker(\pi_2 ), \phi|, R|)$.
		
		We say that $\mathcal{E}$ is an abelian extension if $K$ and $L$ are abelian groups and the relative Rota--Baxter group $(K,L, \alpha,S )$ is trivial.
	\end{definition}
	
	Equivalence of extensions of relative Rota--Baxter groups is defined in the same way as for groups, skew braces and Rota--Baxter groups. Throughout the immediate discussion, $\mathcal{E}$ denotes the abelian extension
	\[
	{\bf 1} \longrightarrow (K,L, \alpha,S ) \stackrel{(i_1, i_2)}{\longrightarrow}  (H,G, \phi, R) \stackrel{(\pi_1, \pi_2)}{\longrightarrow} (A,B, \beta, T) \longrightarrow {\bf 1}
	\]
	of relative Rota--Baxter groups and $(s_H, s_G )$ denotes a~set-theoretic section to $\mathcal{E}$.
	
	\begin{proposition}
		[\!{\cite[p.11]
			{MR4819014}}] \label{f rho chi eqn}
		Let $\mathcal{E}$ be an abelian extension of relative Rota--Baxter groups. Let $a \in A$, $b \in B$, $k \in K$, and $l \in L$. Then the following statements hold:
		\begin{enumerate}
			\item The action $\phi$ is characterised by the equation
			\begin{equation}
				\phi_{s_G (b)l} \big(s_H (a)k \big) = s_H \big(\beta{_b (a)} \big) \, \rho(a,b) \, \phi_{s_G (b)} \big(f(l,a)k\big),
			\end{equation}
			where $f: L \times A \rightarrow K$ is given by
			\[
			f(l,a) = s_H(a)^{-1} \phi_l(s_H(a))
			\]
			and $\rho: A \times B \rightarrow K$ is given by
			\[
			\rho(a,b) = (s_H(\beta_b(a)))^{-1}\phi_{s_G(b)}(s_H(a)).
			\]
			
			\item The relative Rota--Baxter operator $R$ is expressed as
			\begin{equation}
				R \big(s_H (a)k \big) = s_G (T(a)) \, \chi(a) \, S \big(\phi^{-1}_{s_G (T(a))}(k) \big),
			\end{equation}
			where $\chi: A \rightarrow K$ is given by $\chi(a)=s_G (T(a))^{-1}R(s_H (a))$.
		\end{enumerate}
		
	\end{proposition}
	
	\begin{lemma}[\!\!{\cite[Lemma 3.9]{MR4819014}}] \label{properties of f}
		Let $\mathcal{E}$ be an abelian extension of relative Rota--Baxter groups. Then the following assertions hold:
		\begin{enumerate}
			\item The map $f: L \times A \longrightarrow K$ is independent of the choice of the section $s_H$.
			\item $f(l_1 l_2,a) = f( l_1, a)f( l_2,a)$ for all $l_1, l_2 \in L$ and $a \in A$.
			\item $f(l, a_1 a_2 ) = \mu_{ a_2}(f(l,a_1 ))f(l, a_2 )$ for all $l \in L$ and $a_1, a_2 \in A$.
		\end{enumerate}
	\end{lemma}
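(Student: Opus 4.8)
The plan is to first extract two structural facts that reduce all three assertions to short direct computations, and then dispatch each part in turn. Throughout I write $x = s_H(a)$ for a lift of $a \in A$, so that $f(l,a) = x^{-1}\phi_l(x)$, and I identify $K$ and $L$ with their images in $H$ and $G$.

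First I would record that $f(l,a)$ actually lands in $K$. Applying $\pi_1$ and using the morphism condition $\pi_1 \phi_g = \beta_{\pi_2(g)} \pi_1$ from \eqref{rbb datum morphism} with $g = l \in L$, together with $\pi_2(l) = 1$ (since $\im i_2 = \ker \pi_2$), gives $\pi_1(\phi_l(x)) = \pi_1(x) = a$, hence $\pi_1(f(l,a)) = 1$ and $f(l,a) \in \ker \pi_1 = K$. Second, I would show that $\phi_l$ restricts to the identity on $K$: since $\mathcal{E}$ is abelian, the sub relative Rota--Baxter group $(K,L,\alpha,S)$ is trivial, so $\alpha$ is the trivial homomorphism, and the morphism condition $i_1 \alpha_l = \phi_{i_2(l)} i_1$ then forces $\phi_l(k) = k$ for all $k \in K$ and $l \in L$. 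Combined with the hypothesis that $K$ is abelian, these are the only inputs needed.

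For part (1), suppose $s_H'$ is a second section; then $s_H'(a) = s_H(a)\,c(a)$ with $c(a) \in K$. Expanding $f'(l,a) = s_H'(a)^{-1}\phi_l(s_H'(a))$ and using $\phi_l(c(a)) = c(a)$ yields $f'(l,a) = c(a)^{-1} f(l,a)\, c(a)$, which equals $f(l,a)$ because $f(l,a)$ and $c(a)$ both lie in the abelian group $K$. In fact this argument proves the stronger statement that $f(l,a) = x^{-1}\phi_l(x)$ for \emph{any} lift $x$ of $a$, a fact I would isolate explicitly since parts (2) and (3) depend on it. For part (2), I would use that $\phi \colon G \to \Aut(H)$ is a homomorphism, so $\phi_{l_1 l_2}(x) = \phi_{l_1}(\phi_{l_2}(x))$; writing $\phi_{l_2}(x) = x\, f(l_2,a)$ and applying $\phi_{l_1}$, the factor $f(l_2,a) \in K$ is fixed by $\phi_{l_1}$, so $\phi_{l_1 l_2}(x) = \phi_{l_1}(x)\, f(l_2,a)$, and left-multiplying by $x^{-1}$ gives $f(l_1 l_2, a) = f(l_1,a) f(l_2,a)$. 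For part (3), I would exploit the ``any lift'' freedom from part (1) by taking $s_H(a_1) s_H(a_2)$ as a lift of $a_1 a_2$; then $f(l, a_1 a_2) = s_H(a_2)^{-1}\big(s_H(a_1)^{-1}\phi_l(s_H(a_1))\big)\phi_l(s_H(a_2))$, and substituting $\phi_l(s_H(a_2)) = s_H(a_2) f(l,a_2)$ produces $f(l, a_1 a_2) = s_H(a_2)^{-1} f(l,a_1) s_H(a_2)\, f(l,a_2) = \mu_{a_2}(f(l,a_1)) f(l,a_2)$, where $\mu_{a_2}(k) = s_H(a_2)^{-1} k\, s_H(a_2)$ is the conjugation action of $A$ on $K$ (well defined on $K$ by abelianness, independent of the section).

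The computations themselves are routine; the only genuine content, and hence the main obstacle, is establishing the two structural facts cleanly, namely that $f$ is $K$-valued and that $\phi_l$ acts trivially on $K$ for $l \in L$. Both hinge on correctly reading off the morphism compatibilities in \eqref{rbb datum morphism} for the inclusion pair $(i_1,i_2)$ and the projection pair $(\pi_1,\pi_2)$, together with the triviality of $(K,L,\alpha,S)$. Once these are in place, the abelianness of $K$ absorbs every conjugation term and each assertion follows in one or two lines.
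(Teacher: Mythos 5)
Your proof is correct: the two structural facts you isolate (that $f(l,a)\in\ker\pi_1=K$ via $\pi_1\phi_l=\beta_{\pi_2(l)}\pi_1$ with $\pi_2(l)=1$, and that $\phi_l$ fixes $K$ pointwise because $\alpha$ is trivial and $i_1\alpha_l=\phi_{i_2(l)}i_1$) are exactly what is needed, and together with the abelianness of $K$ each of (1)--(3) then follows by the short computations you give, with the ``any lift'' refinement legitimately justifying the choice $s_H(a_1)s_H(a_2)$ in part (3). The paper itself states this lemma only as a citation of \cite[Lemma 3.9]{MR4819014} and gives no proof to compare against, but your argument is the natural one and is complete.
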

	
	\begin{proposition}[\!\!{\cite[Proposition 3.7]{MR4819014}}] \label{construction of actions}
		Let $\mathcal{E}$ be an abelian extension of relative Rota--Baxter groups. Then the following assertions hold:
		\begin{enumerate}
			\item The map $\nu: B \rightarrow \Aut(K)$ defined by
			\begin{equation}\label{nuact}
				\nu_b (k)= \phi_{s_G (b)}(k)
			\end{equation}
			for $b \in B$ and $k \in K$, is a~homomorphism of groups.
			\item The $\mu: A \rightarrow \Aut(K)$ defined by
			\begin{equation}\label{muact}
				\mu_a (k)=s_H (a)^{-1}\, k\, s_H (a)
			\end{equation}
			for $a \in A$ and $ k \in K$, is an anti-homomorphism of groups.
			\item The map $\sigma: B \rightarrow \Aut(L)$ defined by
			\begin{equation}\label{sigmaact}
				\sigma_b (l)=s_G (b)^{-1}\,l\, s_G (b)
			\end{equation}
			for $b \in B$ and $ l \in K$, is an anti-homomorphism of groups.
			
			\item[] 	Further, all the maps are independent of the choice of a~section to $\mathcal{E}$.
			\item The map $\tau_1: A \times A \rightarrow K$ given by
			\begin{equation}\label{mucocycle}
				\tau_1 (a_1, a_2 )= s_H (a_1 a_2 )^{-1}s_H (a_1 )s_H (a_2 )
			\end{equation}
			for $a_1, a_2 \in A$ is a~group 2-cocycle with respect to the action $\mu$.
			\item The map $\tau_2: B \times B \rightarrow L$ given by
			\begin{equation}\label{sigmacocycle}
				\tau_2 (b_1, b_2 )= s_G (b_1 b_2 )^{-1}s_G (b_1 )s_G (b_2 )
			\end{equation}
			for	$b_1, b_2 \in B$ is a~group 2-cocycle with respect to the action $\sigma$.
		\end{enumerate}
	\end{proposition}
	
	By examining the relationships between $\nu, \mu, \sigma, f$ and by their properties outlined in Lemma~\ref{properties of f} and Proposition~\ref{construction of actions}, we are led to the following definition of a~module over a~relative Rota--Baxter group~\cite[Definition 3.12]{MR4819014}.
	
	\begin{definition}
		A module over a~relative Rota--Baxter group $(A, B,\beta, T)$ is a~trivial relative Rota--Baxter group $(K, L, \alpha, S)$ such that there exists a~quadruple $(\nu, \mu, \sigma, f)$ of maps (called an action) satisfying the following conditions:
		\begin{enumerate}
			\item The group $K$ is a~left $B$-module and a~right $A$-module with respect to the actions $\nu:B \to \Aut(K) $ and $\mu: A \to \Aut(K)$, respectively.
			\item The group $L$ is a~right $B$-module with respect to the action $\sigma: B \to \Aut(L)$.
			\item The map $f: L \times A \to K$ has the property that $f(-,a): L \rightarrow K$ is a~homomorphism for all $a \in A$ and $f(l,-): A \rightarrow K$ is a~derivation with respect to the action $\mu$ for all $l \in L$.
			\item $S\big(\nu^{-1}_{T(a)}(\mu_{a}(k)) \,\nu^{-1}_{T( a)}(f(S(k), a))\big)=\;\sigma_{T(a)}(S(k))$ for all $ a~\in A$ and $k \in K$.
			\item $\nu_b (\mu_{a}(k))= \; \mu_{ \beta{_{b}(a)}}( \nu_{b} (k))$ for all $a \in A$, $b \in B$ and $k \in K$.
		\end{enumerate}
	\end{definition}
	
	Let $G$ be a~group and $M$ an abelian group. Let $C^n (G, M)$ denote the group of maps $G^n \to M$ that vanish on degenerate tuples. Similarly, for another group $H$, let $C(G \times H, M)$ denote the group of maps $G \times H \to M$ that vanish on degenerate tuples.
	
	Let $\mathcal{K} = (K, L, \alpha, S)$ be an $\mathcal{A} = (A, B, \beta, T)$-module with respect to the action $(\nu, \mu, \sigma, f)$. We set
	\begin{eqnarray*}
		\C^{1}_{RRB}(\mathcal{A}, \mathcal{K}) &:=& C^1 (A, K) \oplus C^1 (B,L)~\textrm{and}\\
		\C^{2}_{RRB}(\mathcal{A}, \mathcal{K}) &:=& C^2 (A, K) \oplus C^2 (B,L) \oplus C(A \times B, K) \oplus C(A,L).
	\end{eqnarray*}
	Let $\Z^2_{RRB}(\mathcal{A}, \mathcal{K})$ be the subgroup of $\C^{2}_{RRB}(\mathcal{A}, \mathcal{K})$ consisting of the elements $(\tau_1, \tau_2, \rho, \chi)$ that satisfy the conditions
	\begin{eqnarray}
		\tau_1 (a_2, a_3 ) \tau_1 ( a_1, a_2 a_3 ) &=& \tau_1 (a_1 a_2, a_3 ) \mu_{a_3}(\tau_1 (a_1, a_2 )),\label{cocycle1}\\
		\tau_2 (b_2, b_3 ) \tau_2 ( b_1, b_2 b_3 ) &=& \tau_2 (b_1 b_2, b_3 ) \sigma_{b_3}(\tau_2 (b_1, b_2 )),\label{cocycle2}\\
		\rho(\beta_{b_2}(a_1 ), b_1 ) \,\nu_{b_1}(\rho(a_1,b_2 )) &=& \rho(a_1, b_1 b_2 ) \, \nu_{b_1 b_2}( f(\tau_2 (b_1, b_2 ), a_1 )),\label{cocycle3}\\
		\rho(a_1 a_2, b_1 ) \, \nu_{b_1}(\tau_1 (a_1, a_2 )) &=& \mu_{ \beta_{b_1}(a_2 )}(\rho(a_1,b_1 ))\; \rho(a_2, b_1 ) \; \tau_1 (\beta_{b_1}(a_1 ), \beta_{b_1}(a_2 )), \\ \nonumber
		\tau_2 (T(a_1 ), T(a_2 ) ) \delta^1_{\sigma}(\chi)(a_1, a_2 ) &= & S \big(\nu^{-1}_{T(a_1 \circ_T a_2 )}\big(\rho(a_2, T(a_1 )) \, \tau_1 (a_1,\beta_{T(a_1 )}(a_2 )) \nonumber \\
		&& \quad \nu_{T(a_1 )}(f(\chi(a_1 ), a_2 )) \big)\big)\label{cocycle5}
	\end{eqnarray}
	for all $a_1, a_2, a_3 \in A$ and $b_1, b_2, b_3 \in B$, where
	\[
	\delta^1_{\sigma}(\chi)(a_1, a_2)= \chi(a_2) \chi(a_1 \circ_T a_2)^{-1} \sigma_{T(a_2)}(\chi(a_1)).
	\]
	
	Next, let $\B^2_{RRB}(\mathcal{A}, \mathcal{K})$ be the subgroup of $\Z^2_{RRB}(\mathcal{A}, \mathcal{K})$ formed by elements $(\tau_1, \tau_2, \rho, \chi)$ for which there exist $\kappa_1, \kappa_2 \in \C^{1}_{RRB}(\mathcal{A}, \mathcal{K})$ satisfying the conditions
	\begin{eqnarray*}
		\tau_1 (a_1, a_2 ) &=&\kappa_1 (a_1 a_2 )^{-1}\kappa_1 (a_2 ) \mu_{a_2} (\kappa_1 (a_1 )),\\
		\tau_2 (b_1, b_2 ) &=& \kappa_2 (b_1 b_2 )^{-1} \kappa_2 (b_2 ) \sigma_{b_2} (\kappa_2 (b_1 )),\\
		\rho(a_1, a_2 ) &=&\nu_b\big( f(\kappa_2 (b_1 ), a_1 )\kappa_1 (a_1 ) \big) \big(\kappa_1 (\beta_{b_1}(a_1 ))\big)^{-1},\\
		\chi(a_1 ) &=& S \big(\nu^{-1}_{T(a_1 )}(\kappa_1 (a_1 )) \big) \big( \kappa_2 (T(a_1 )) \big)^{-1}
	\end{eqnarray*}
	for all $a_1, a_2 \in A$ and $b_1, b_2 \in B$.
	
	We define the second cohomology group of $\mathcal{A}$ with coefficients in $\mathcal{K}$ by
	\[
	\Ho^2_{RRB}(\mathcal{A}, \mathcal{K})=\Z^2_{RRB}(\mathcal{A}, \mathcal{K})/  \B^2_{RRB}(\mathcal{A}, \mathcal{K}).
	\]
	
	Consider a~2-cocycle $(\tau_1, \tau_2, \rho, \chi) \in \Ker(\partial_{RRB}^2)$. Define $H=A\times_{\tau_1} K$ and $G= B \times_{\tau_2} L$ to be the group extensions of $A$ by $K$ and $B$ by $L$ associated to the group 2-cocycles $\tau_1$ and $\tau_2$, respectively. Further, define $\phi: G \rightarrow \Aut(H)$ by
	\begin{align}
		\phi_{(b,l)}(a,k)=\big(\beta{_{b}(a)}, \rho(a,b) \,\nu_b (f(l,a)k)\big)
	\end{align}
	and $R: H \rightarrow G$ by
	\begin{align}
		R(a,k)=\big(T(a), \chi(a)\,S(\nu^{-1}_{T(a)}(k))\big)
	\end{align}
	for all $a \in A$, $b \in B$, $k \in K$ and $l \in L$. Using the fact that $(\tau_1, \tau_2, \rho, \chi) \in \Ker(\partial_{RRB}^2 )$, it follows that $(H, G, \phi, R)$ is a~relative Rota--Baxter group and is an extension of $(A,B, \beta, T)$ by $(K,L, \alpha, S)$ denoted by
	\[
	\mathcal{E}(\tau_1, \tau_2, \rho, \chi) : \quad \quad {\bf 1} \to (K,L, \alpha,S ) \stackrel{(i_1, i_2)}{\longrightarrow}  (H, G, \phi, R) \stackrel{(\pi_1, \pi_2)}{\longrightarrow} (A,B, \beta, T) \to {\bf 1}.
	\]
	
	Let $\Ext_{(\nu, \mu, \sigma, f)}(\mathcal{A}, \mathcal{K})$ denote the set of equivalence classes of extensions of $\mathcal{A}$ by $\mathcal{K}$ for which the associated action is $(\nu, \mu, \sigma, f)$. Then we have the following result~\cite[Theorem~3.18]{MR4819014}.
	
	\begin{theorem}\label{ext and cohom bijection}
		Let $\mathcal{A}= (A,B, \beta, T)$ be a~relative Rota--Baxter group and $\mathcal{K}= (K,L,\alpha,S )$ be a~trivial relative Rota--Baxter group, where $K$ and $L$ are abelian groups. Let $(\nu, \mu, \sigma, f)$ be the quadruple of actions turning $\mathcal{K}$ into an $\mathcal{A}$-module. Then there is a~bijective function $\Gamma : \Ho^2_{RRB}(\mathcal{A},\mathcal{K})\to \Ext_{(\nu,\mu,\sigma,f)}(\mathcal{A},\mathcal{K})$ given by
		\[
		\Gamma \big([\tau_1, \tau_2, \rho, \chi] \big)=[\mathcal{E}(\tau_1, \tau_2, \rho, \chi)].
		\]
	\end{theorem}
	
	\section[From relative RB groups to RB groups on semi-direct products]{From relative Rota--Baxter groups to Rota--Baxter groups on semi-direct products}\label{section RRB to RRB on SDP}
	Recall that, the semi-direct product $H\rtimes_{\phi} G$ of groups $G$ and $H$ under the (left) action $\phi\colon G\rightarrow \Aut(H)$ is the set
	$H \times G$ equipped with the multiplication
	\[
	(h_1, g_1) (h_2, g_2) = \big(h_1 \phi_{g_1}(h_2), g_1 g_2 \big)
	\]
	for all $h_1, h_2 \in H$ and $g_1, g_2 \in G$.
	
	The following result shows that each relative Rota--Baxter group defines a~Rota--Baxter group on the natural semi-direct product~\cite[Proposition 3.1]{MR4868162}.
	
	\begin{proposition}\label{RR-R}
		Let $(H, G, \phi, R)$ be a~relative Rota--Baxter group. Then, the function $\widetilde{R} \colon H\rtimes_{\phi} G \to H\rtimes_{\phi} G$ given by
		\[
		\widetilde{R}\big((h, g)\big) = \big(1, g^{-1} R(h)\big)
		\]
		for all $h \in H$ and $g \in G$, is a~Rota--Baxter operator on the semi-direct product $H\rtimes_{\phi} G$.
	\end{proposition}
	
	We observe that the preceding construction is, in fact, functorial.
	
	\begin{proposition}\label{RRB to RB functor}
		Let $\mathcal{RRB}$ and $\mathcal{RB}$ denote the categories of relative Rota--Baxter groups and Rota--Baxter groups, respectively. Then the association $\mathcal{F}: \mathcal{RRB} \to \mathcal{RB}$ given by $\mathcal{F}\big((H, G, \phi, R) \big) = \big(H\rtimes_{\phi} G, \widetilde{R} \big)$ is a~covariant functor.
	\end{proposition}
	
	\begin{proof}
		Let $(f_1, f_2 ): (H,G,\phi,R) \to (K,L,\alpha,S)$ be a~homomorphism of relative Rota--Baxter groups. Define $\widetilde{f}:H\rtimes_{\phi} G \to K\rtimes_{\alpha} L$ by
		$\widetilde{f}(h,g)=(f_1 (h),f_2 (g))$ for all $h \in H$ and $g \in G$. We claim that $\widetilde{f}:(H\rtimes_{\phi} G, \widetilde{R})\to (K\rtimes_{\alpha}L, \widetilde{S})$ is a~homomorphism of Rota--Baxter groups. Indeed, for $g, g' \in G$ and $h, h' \in H$, we have
		\begin{eqnarray*}
			\widetilde{f}\big((h,g)(h',g')\big)&=&\widetilde{f}\big((h\phi_g (h'),gg')\big)\\
			&=&\big(f_1 (h\phi_g (h')),f_2 (gg')\big)\\
			&=& \big(f_1 (h)f_1 (\phi_g (h')),f_2 (gg') \big)\\
			&=& \big(f_1 (h)\alpha_{f_2 (g)}(f_1 (h')),f_2 (gg') \big),\\
			&& \text{since $(f_1,f_2 )$ is a~homomorphism of relative Rota--Baxter groups}\\
			&=&\big(f_1 (h),f_2 (g)\big)\big(f_1 (h'),f_2 (g')\big)\\
			&=& \widetilde{f} \big((h,g)\big) \widetilde{f} \big((h',g')\big)
		\end{eqnarray*}
		and
		\begin{eqnarray*}
			\widetilde{f}\, \widetilde{R}\big((h,g)\big)&=&\widetilde{f} \big((1,g^{-1}R(h)) \big)\\
			&=& \big(1,f_2 (g^{-1}R(h) \big)\\
			&=& \big(1,f_2 (g)^{-1}f_2 R(h) \big)\\
			&=& \big(1,f_2 (g)^{-1}S f_1 (h)\big),\\
			&& \text{since $(f_1,f_2 )$ is a~homomorphism of relative Rota--Baxter groups}\\
			&=&\widetilde{S} \big(f_1 (h),f_2 (g) \big)\\
			&=&\widetilde{S} \, \widetilde{f} \big((h,g)\big).
		\end{eqnarray*}
		It is now immediate to see that $\mathcal{F}: \mathcal{RRB} \to \mathcal{RB}$ is a~functor.
	\end{proof}
	
	In fact, $\mathcal{F}$ maps equivalent extensions to equivalent extensions.
	
	\begin{corollary}\label{RRB2rbext}
		Let $\mathcal{A}= (A,B, \beta, T)$ be a~relative Rota--Baxter group, $\mathcal{K}= (K,L,\alpha,S )$ a~trivial relative Rota--Baxter group such that $K$ and $L$ are abelian groups, and
		\[
		\mathcal{E} : \quad {\bf 1} \longrightarrow (K,L,\alpha,S ) \stackrel{(i_1, i_2)}{\longrightarrow}  (H,G, \phi, R) \stackrel{(\pi_1, \pi_2)}{\longrightarrow} (A,B, \beta, T) \longrightarrow {\bf 1}
		\]
		an extension of relative Rota--Baxter groups with associated action $(\nu, \mu, \sigma, f)$. Then
		\begin{equation*}
			\widetilde{\mathcal{E}}: \quad \textbf{1} \longrightarrow (K\rtimes_{\alpha}L, \widetilde{S}) \stackrel{\widetilde{i}}{\longrightarrow} (H\rtimes_{\phi} G, \widetilde{R}) \stackrel{\widetilde{\pi}}{\longrightarrow} (A\rtimes_{\beta} B, \widetilde{T}) \longrightarrow \textbf{1}
		\end{equation*}
		is an extension of Rota--Baxter groups with associated action $\gamma$ given by
		\begin{equation}\label{gamma action rb}
			\gamma_{(a,b)}(k,l)=\big(\nu^{-1}_b (\mu_a (k)f(l,a)),\, \sigma_b (l) \big)
		\end{equation}
		for all $a\in A$, $b\in B$, $k\in K$ and $l\in L$. Further, there is a~map
		\[
		\Pi: \Ext_{(\nu, \mu, \sigma, f)}(\mathcal{A}, \mathcal{K}) \to \Ext_{\gamma}(A\rtimes_\beta B, K\times L)
		\quad \text{given by} \quad
		\Pi\big([\mathcal{E}] \big)= [\widetilde{\mathcal{E}}].
		\]
	\end{corollary}
	
	\begin{proof}
		Let $(s_H, s_G )$ be a~set-theoretic section to $\mathcal{E}$ inducing the action $(\nu, \mu, \sigma, f)$. Then the map $s((a,b))=(s_H (a),s_G (b))$ is a~set-theoretic section to $\widetilde{\mathcal{E}}$. Let $\gamma$ be the action associated to the extension $\mathcal{E}$. For $a\in A$, $b\in B$, $k\in K$ and $l\in L$, we see that
		\begin{eqnarray*}
			\gamma_{(a,b)}(k,l)&=&s(a,b)^{-1}(k,l)s(a,b)\\
			&=& \big(s_H (a),~s_G (b)\big)^{-1}(k,\,l) \big(s_H (a),~s_G (b) \big)\\
			&=&\big((\phi_{s_G (b)^{-1}}(s_H (a)))^{-1},~s_G (b)^{-1} \big) \big(k\,\phi_l (s_H (a)),~ls_G (b)\big)\\
			&=&\big((\phi_{s_G (b)^{-1}}(s_H (a)))^{-1}\phi_{s_G (b)^{-1}}(k \,\phi_l (s_H (a))),~s_G (b)^{-1}ls_G (b)\big)\\
			&=&\big(\phi_{s_G (b)^{-1}}\big(s_H (a)^{-1}k\,\phi_l (s_H (a))\big),~\sigma_b (l)\big),~\text{by~\eqref{sigmaact}}\\
			&=&\big(\phi_{s_G (b)^{-1}}\big(s_H (a)^{-1}k\,s_H (a)s_H (a)^{-1}\phi_l (s_H (a))\big),~\sigma_b (l)\big)\\
			&=&\big(\phi_{s_G (b)^{-1}}\big(\mu_a (k)f(l,a)\big),~\sigma_b (l)\big),~\text{by Lemma~\eqref{properties of f} and~\eqref{muact}}\\
			&=& \big(\nu^{-1}_b (\mu_a (k)f(l,a)),~\sigma_b (l) \big),~\text{by~\eqref{nuact}.}
		\end{eqnarray*}
		Hence, the associated action $\gamma$ of the induced extension $\widetilde{\mathcal{E}}$ is completely determined by the action $(\nu,\mu,\sigma,f)$ of the extension $\mathcal{E}$. A direct check shows that equivalent extensions of relative Rota--Baxter groups map to equivalent extensions of Rota--Baxter groups. By~\cite[Proposition 3.17]{MR4819014}, equivalent extensions of relative Rota--Baxter groups induce identical associated actions, and the result follows.
	\end{proof}
	
	Let $\mathcal{A}= (A,B, \beta, T)$ and $\mathcal{K}= (K,L,\alpha,S )$ be as in Corollary~\ref{RRB2rbext}. Define the map $\tau\colon (A\rtimes_\beta B)\times (A\rtimes_\beta B)\to K\times L$ by
	\[
	\tau \big((a_1,b_1),(a_2,b_2)\big)=s\big((a_1,b_1)(a_2,b_2)\big)^{-1}s \big((a_1,b_1)\big) s \big((a_2,b_2)\big)
	\]
	for all $a_1, a_2 \in A$ and $b_1, b_2 \in B$. Then, we have
	\begin{align}
		\tau&\big((a_1,b_1 ),(a_2,b_2 )\big) \nonumber\\
		&= s\big((a_1\beta_{b_1}(a_2 ), \,b_1 b_2 )\big)^{-1} \big(s_H (a_1 ), \,s_G (b_1 ) \big) \big(s_H (a_2 ), \,s_G (b_2 ) \big)\nonumber\\
		&= \left(s_H (a_1\beta_{b_1}(a_2)), \, s_G (b_1 b_2 )\right)^{-1} \left(s_H (a_1) \phi_{s_G (b_1)}(s_H (a_2 )), \, s_G (b_1) s_G (b_2)\right) \nonumber\\
		&= \left(\phi_{s_G (b_1 b_2)^{-1}} (s_H (a_1 \beta_{b_1}(a_1 )))^{-1}, \, s_G (b_1 b_2)^{-1} \right) \left( s_H (a_1) \phi_{s_G (b_1)} (s_H (a_2)), \, s_G (b_1) s_G (b_2)\right) \nonumber\\
		&= \left(\phi_{s_G (b_1 b_2)^{-1}} (s_H (a_1 \beta_{b_1}(a_1)))^{-1} \phi_{s_G (b_1 b_2)^{-1}} (s_H (a_1) \phi_{s_G (b_1)}(s_H (a_2))), \right. \nonumber\\
		&\qquad \left. s_G (b_1 b_2)^{-1} s_G (b_1) s_G (b_2) \right) \nonumber\\
		&=\big(\phi_{s_G (b_1 b_2 )^{-1}}\big((s_H (a_1\beta_{b_1}(a_1 )))^{-1}s_H (a_1 )\phi_{s_G (b_1 )}(s_H (a_2 ))\big), \,\tau_2 (b_1,b_2 )\big), \text{using~\eqref{sigmacocycle}}\nonumber\\
		&=\big(\phi_{s_G (b_1 b_2 )^{-1}}\big(\tau_1 (a_1,\beta_{b_1}(a_2 ))s_H (\beta_{b_1}(a_2 ))^{-1}\phi_{s_G (b_1 )}(s_H (a_2 ))\big), \,\tau_2 (b_1,b_2 )\big),~\text{using~\eqref{mucocycle}}\nonumber\\
		&=\big(\phi_{s_G (b_1 b_2 )^{-1}}\big(\tau_1 (a_1,\beta_{b_1}(a_2 ))\rho(a_2,b_1 )\big), \,\tau_2 (b_1,b_2 )\big), \text{using Proposition~\eqref{f rho chi eqn}(1)}\nonumber\\
		&=\left(\nu^{-1}_{b_1 b_2}\big(\tau_1 (a_1,\beta_{b_1}(a_2 ))\rho(a_2,b_1 )\big), \,\tau_2 (b_1,b_2 )\right). \label{tau 1 prime}
	\end{align}
	Now, define the map $r\colon A\rtimes_\beta B\to K\times L$ by
	\[
	r(a,b)=s \big(\widetilde{T}(a,b)\big)^{-1} \, \widetilde{R} \big(s(a,b)\big)
	\]
	for all $a \in A$ and $b \in B$. Then, we have
	\begin{eqnarray}
		r (a,b) &=&s \big(1,b^{-1}T(a) \big)^{-1}\, \widetilde{R} \big(s_H (a),s_G (b)\big)\nonumber\\
		&=&\big(1,s_G (b^{-1}T(a))\big)^{-1}\big(1,s_G (b)^{-1}R(s_H (a))\big)\nonumber\\
		&=&\big(1,s_G (b^{-1}T(a))^{-1}\big)\big(1,s_G (b)^{-1}R(s_H (a))\big)\nonumber\\
		&=&\big(1,s_G (b^{-1}T(a))^{-1}s_G (b)^{-1}R(s_H (a))\big)\nonumber\\
		&=&\big(1,\tau_2 (b^{-1},T(a))s_G (T(a))^{-1}s_G (b^{-1})^{-1}s_G (b)^{-1}R(s_H (a))\big), \text{using~\eqref{sigmacocycle}}\nonumber\\
		&=&\big(1,\tau_2 (b^{-1},T(a))s_G (T(a))^{-1}\tau_2 (b,b^{-1})^{-1}s_G (T(a))\chi(a)\big),\nonumber\\
		&& \text{using Proposition~\eqref{f rho chi eqn}(2) and~\eqref{sigmacocycle}}\nonumber\\
		&=&\big(1,\tau_2 (b^{-1},T(a))\sigma_{T(a)}(\tau_2 (b,b^{-1})^{-1})\chi(a)\big), \text{using~\eqref{sigmaact}}\nonumber\\
		&=&\big(1,\tau_2 (b,b^{-1}T(a))^{-1}\chi(a)\big), \text{using~\eqref{cocycle2}.\label{g cocycle term}}
	\end{eqnarray}
	Next, we explore the relationship between $\Ho^2_{RRB}(\mathcal{A}, \mathcal{K})$ and $\Ho^2_{RB}(A\rtimes_\beta B, K\times L)$ (see Subsections~\ref{RB cohomology} and~\ref{cohomology RRB} for the constructions). Let $\Gamma: \Ho^2_{RRB}(\mathcal{A}, \mathcal{K}) \to \Ext_{(\nu, \mu, \sigma, f)}(\mathcal{A}, \mathcal{K})$ and $\Lambda:\Ext_{\gamma}(A\rtimes_\beta B, K\times L) \to \Ho^2_{RB}(A\rtimes_\beta B, K\times L)$ be the bijections defined in Theorems~\ref{ext and cohom bijection} and~\ref{extnrb}, respectively. Let $\Pi: \Ext_{(\nu, \mu, \sigma, f)}(\mathcal{A}, \mathcal{K}) \to \Ext_{\gamma}(A\rtimes_\beta B, K\times L)$ be the map defined in Corollary~\ref{RRB2rbext}. Then we have the following result.
	
	\begin{proposition}\label{cohom RRB to cohom RB}
		Let $\mathcal{A}= (A,B, \beta, T)$ be a~relative Rota--Baxter group and assume that $\mathcal{K} = (K, L, \alpha, S)$ is a~module over $\mathcal{A}$ with respect to the action $(\nu, \mu,\sigma, f)$. Then the map
		\[
		\Omega_{RB}:=\Lambda \Pi \Gamma: \Ho^2_{RRB}(\mathcal{A}, \mathcal{K}) \to \Ho^2_{RB}(A\rtimes_\beta B, K\times L)
		\]
		is a~homomorphism of groups.
	\end{proposition}
	
	\begin{proof}
		The map $\Omega_{RB}$ is given by $\Omega_{RB}\big( [\tau_1, \tau_2, \rho, \chi]\big)= [\tau^{(\tau_1, \tau_2, \rho, \chi)}, \, r^{(\tau_1, \tau_2, \rho, \chi)}]$, where
		\begin{eqnarray*}
			\tau^{(\tau_1, \tau_2, \rho, \chi)} \big((a_1,b_1 ),(a_2,b_2 )\big)&=&\big(\nu^{-1}_{b_1 b_2}\big(\tau_1 (a_1,\beta_{b_1}(a_2 ))\rho(a_2,b_1 )\big),\tau_2 (b_1,b_2 )\big),\\
			r^{(\tau_1, \tau_2, \rho, \chi)}(a,b)&=&\big(1,\tau_2 (b,b^{-1}T(a))^{-1}\chi(a)\big)
		\end{eqnarray*}
		for all $a,a_1,a_2\in A$ and $b,b_1,b_2\in B$. Let $[\tau_1, \tau_2, \rho, \chi]$ and $[\tau^\prime_1, \tau^\prime_2, \rho^\prime, \chi^\prime]$ be elements of $\Ho^2_{RRB}(\mathcal{A}, \mathcal{K})$. Then, we have
		\begin{eqnarray*}
			\Omega_{RB} \big( [\tau_1, \tau_2, \rho, \chi] \, [\tau^\prime_1, \tau^\prime_2, \rho^\prime, \chi^\prime] \big) &=& \Omega_{RB} \big( [\tau_1\tau^\prime_1, \tau_2\tau^\prime_2, \rho \rho^\prime, \chi\chi^\prime]\big)\\
			&=& \big[\tau^{(\tau_1\tau^\prime_1, \tau_2\tau^\prime_2, \rho \rho^\prime, \chi\chi^\prime)},\, r^{(\tau_1\tau^\prime_1, \tau_2\tau^\prime_2, \rho \rho^\prime, \chi\chi^\prime)} \big].
		\end{eqnarray*}
		It is easy to see that
		\[
		\tau^{(\tau_1\tau^\prime_1, \tau_2\tau^\prime_2, \rho \rho^\prime, \chi\chi^\prime)}=	\tau^{(\tau_1, \tau_2, \rho, \chi)} \,	\tau^{(\tau^\prime_1, \tau^\prime_2, \rho^\prime, \chi^\prime)} \quad \textrm{and} \quad r^{(\tau_1\tau^\prime_1, \tau_2\tau^\prime_2, \rho \rho^\prime, \chi\chi^\prime)}=	r^{(\tau_1, \tau_2, \rho, \chi)}\, r^{(\tau^\prime_1, \tau^\prime_2, \rho^\prime, \chi^\prime)}.
		\]
		Hence, we obtain
		\begin{eqnarray*}
			\Omega_{RB} \big( [\tau_1, \tau_2, \rho, \chi] [\tau^\prime_1, \tau^\prime_2, \rho^\prime, \chi^\prime] \big) &=& \big[\tau^{(\tau_1, \tau_2, \rho, \chi)},r^{(\tau_1, \tau_2, \rho, \chi)} \big] \big[\tau^{(\tau^\prime_1, \tau^\prime_2, \rho^\prime, \chi^\prime)},r^{(\tau^\prime_1, \tau^\prime_2, \rho^\prime, \chi^\prime)} \big]\\
			& =& \Omega_{RB} \big( [\tau_1, \tau_2, \rho, \chi] \big) \,	\Omega_{RB} \big([\tau^\prime_1, \tau^\prime_2, \rho^\prime, \chi^\prime] \big),
		\end{eqnarray*}
		which is desired.
	\end{proof}
	
	\section{From skew left braces to skew left braces on semi-direct products}\label{section SLB to SLB on SDP}
	It is known from~\cite[Theorem 3.10]{MR4857550} that if $(H,\cdot,\circ)$ is a~skew left brace, then the quadruple $(H^{(\cdot)},H^{(\circ)},\lambda^H,\Id_H )$ is a~relative Rota--Baxter group, where
	\[
    \lambda^H: H^{(\circ)}\to \Aut(H^{(\cdot)})
    \quad \text{is defined by} \quad 
	\lambda^H_a(b)=a^{-1}\cdot(a\circ b)
	\]
	for all $a, b \in H$. Furthermore, the skew left brace induced by the relative Rota--Baxter group $(H^{(\cdot)},H^{(\circ)},\lambda^H,\Id_H )$ is the same as $(H, \cdot, \circ)$. Also, if $(H^{(\cdot)},H^{(\circ)},\lambda^H,\Id_H )$ is a~relative Rota--Baxter group, then $(H,\cdot,\circ)$ is a~skew left brace with $\lambda^H$ as its associated action.
	
	Let $(H,\cdot,\circ)$ be a~skew left brace and let
	\[
	\widetilde{H}=H^{(\cdot)} \rtimes_{\lambda^H} H^{(\circ)}
	\]
	denote the associated semi-direct product. In view of Proposition~\ref{RR-R} and Remark~\ref{RRB is RB}, there is a~skew left brace structure on $\widetilde{H}$ induced by the Rota--Baxter group $(\widetilde{H}, \widetilde{R})$, where
	$\widetilde{R}: \widetilde{H}\to \widetilde{H}$ is given by
	\begin{equation}\label{RB operator on semi-direct product}
		\widetilde{R} (h, g) = (1, g^\dagger\circ h)
	\end{equation}
	for all $h,g \in H$. The additive group operation $\bullet$ is given by
	\[
	(h_1, g_1)\bullet (h_2, g_2) = \big(h_1\cdot \lambda^H_{g_1}(h_2), \, g_1\circ g_2 \big),
	\]
	and the multiplicative group operation $\odot$ is given by
	\[
	(h_1, g_1)\odot  (h_2, g_2) = \big(h_1\circ h_2,  \, h_1\circ g_2\circ h_1^\dagger \circ g_1 \big)
	\]
	for all $g_1, g_2, h_1, h_2 \in H$.
	
	\begin{definition}
		Given a~skew left brace $(H,\cdot,\circ)$, the skew left brace $(\widetilde{H}, \bullet, \odot)$ is called the \textit{square} of $(H,\cdot,\circ)$.
	\end{definition}
	
	\begin{remark}\label{rem square arise from RBG}
		By definition, the square of a~skew left brace always arises from a~Rota--Baxter group. Further, the associated action for $(\widetilde{H}, \bullet, \odot)$ is given by
		\[
		\lambda^{\widetilde{H}}_{(h_1, g_1)}(h_2, g_2) = \widetilde{R} (h_1, g_1) \bullet (h_2, g_2) \bullet \big(\widetilde{R} (h_1, g_1)\big)^{-1}=  (1, g_1^\dagger\circ h_1) \bullet (h_2, g_2)  \bullet  (1, g_1^\dagger\circ h_1)^{-1}.
		\]
	\end{remark}
	
	\begin{example}
		Let $(H,\cdot,\cdot)$ be a~trivial skew left brace. Then, we have
		\begin{eqnarray*}
			(h_1, g_1 )\bullet (h_2, g_2 ) &=& \big(h_1\cdot h_2, \,g_1\cdot g_2 \big)~\textrm{and}\\
			(h_1, g_1 )\odot (h_2, g_2 ) &=& \big(h_1\cdot h_2, \, h_1\cdot g_2\cdot h_1^{-1} \cdot g_1 \big)\nonumber.
		\end{eqnarray*}
		Thus, the square of a~trivial skew left brace need not be a~trivial skew left brace. However, the square of a~trivial left brace is always trivial.
	\end{example}
	
	Recall the following definition from~\cite[Corollary 3.38]{MR3763907}.
	
	\begin{definition}
		Let $(H,\cdot,\circ)$ be a~skew left brace such that
		\begin{equation*}
			\lambda^H_h\lambda^H_g =\lambda^H_{\lambda^H_h (g)}\lambda^H_h
		\end{equation*}
		for all $g, h \in H$. Then $H\times H $ admits a~skew left brace structure, where
		\[
		(h,g) \cdot (h',g') = (h \cdot h', \, g \cdot g') \quad \text{and} \quad
		(h,g)\circ(h',g') = \big(h\circ h', \, g\circ\lambda^H_h (g') \big).
		\]
		This skew left brace is called the double of $(H,\cdot,\circ)$ and is denoted by $(D(H), \cdot,\circ)$.
	\end{definition}
	
	\begin{remark}
		It follows from the definition that the double of a~trivial skew left brace is always a~trivial skew left brace. If $(H,\cdot,\circ)$ is a~skew left brace, then the associated action for the double $(D(H), \cdot,\circ)$ is given by
		\begin{equation*}
			\lambda^{D(H)}_{(h,g)}(h',g')=\big(\lambda^H_h (h'), \, \lambda^H_{g\circ h}(g')\big).
		\end{equation*}
		Thus, if $\lambda^H_h$ is not an inner automorphism of $H^{(\cdot)}$ for some $h$, then $\lambda^{D(H)}_{(h,g)}$ is not an inner automorphism of $D(H)^{(\cdot)}$ for all $(h,g)$. By~\eqref{RB induced SLB associated action}, the necessary condition for a~skew left brace to be induced from a~Rota--Baxter operator is that its associated action must take values in the inner automorphism group of its additive group. Thus, if $\lambda^H_h$ is not an inner automorphism of $H^{(\cdot)}$ for some $h$, then by Remark~\ref{rem square arise from RBG}, the double $(D(H), \cdot, \circ)$ and the square $(\widetilde{H}, \bullet, \odot)$ are distinct.
	\end{remark}
	
	\begin{remark}
		By a~similar argument, the square of a~skew left brace is seen to differ from the double semi-direct product of skew left braces~\cite[Corollary 21]{MR4209715} and the \textit{twofold semi-direct products of skew left braces}~\cite[Proposition 2.1]{MR4950651}, where the latter relates to \textit{the semi-direct product of digroups}~\cite{MR4745565}.
	\end{remark}
	
	\begin{example}
		Consider the skew left brace $(\mathbb{Z}, +, \circ)$, where $n \circ m= n +(-1)^n m$. Note that $\lambda^{\mathbb{Z}}_n (m)=(-1)^{n}m$, and hence $\lambda^{\mathbb{Z}}_n$ is not an inner automorphism of $\mathbb{Z}^{(+)}$ for each $n \in \mathbb{Z}$. For the square $(\widetilde{\mathbb{Z}},\bullet,\odot)$, we have
		\begin{eqnarray*}
			(n_1,m_1 )\bullet (n_2,m_2 )&=&\big(n_1 +(-1)^{m_1}n_2,~m_1 + (-1)^{m_1} m_2\big)~\textrm{and}\\
			(n_1,m_1 )\odot(n_2,m_2 ) &=&\big(n_1 + (-1)^{n_1} n_2, n_1 + (-1)^{n_1}m_2 + (-1)^{ m_2}(m_1 -n_1 )\big).
		\end{eqnarray*}
		On the other hand, for the double $(D(\mathbb{Z}),\cdot,\circ)$, we have
		\begin{eqnarray*}
			(n_1,m_1 ) \cdot (n_2,m_2 )&=&(n_1 +n_2, m_1 +m_2 )~\textrm{and}\\
			(n_1,m_1 )\circ(n_2,m_2 )&=&\big(n_1 + (-1)^{n_1} n_2, m_1 +(-1)^{n_1 +m_1}m_2\big).
		\end{eqnarray*}
		Thus, the square and the double of the skew brace $(\mathbb{Z},+,\circ)$ are not isomorphic to each other.
	\end{example}
	
	We prove that the association $(H,\cdot,\circ) \mapsto (\widetilde{H}, \bullet, \odot)$ is functorial.
	
	\begin{proposition}\label{sb iso to isb iso}
		Let $\mathcal{SB}$ denote the category of skew left braces. Then the association $\mathcal{O}: \mathcal{SB} \to \mathcal{SB}$ given by $\mathcal{O}((H,\cdot,\circ))= (\widetilde{H}, \bullet, \odot)$ is a~covariant functor.
	\end{proposition}
	
	\begin{proof}
		Let $f\colon (H,\cdot,\circ)\to(G,\cdot,\circ)$ be a~morphism of skew left braces. Define
		\[
		\tilde{f}: (\tilde{H},\bullet,\odot)\to (\tilde{G},\bullet,\odot)
		\quad \text{by} \quad \widetilde{f}\big((h,g)\big)=\big(f(h),f(g)\big)\] 
		
		for all $g, h \in H$. We claim that $\widetilde{f}$ is a~morphism of skew left braces. Indeed, we have
		\begin{eqnarray*}
			\widetilde{f} \big((h,g)\bullet(h',g') \big)&=&\widetilde{f} \big((h \cdot \lambda^H_g (h'),~g\circ g')\big)\\
			&=& \big(f(h \cdot \lambda^H_g (h')),~f(g\circ g')\big)\\
			&=& \big(f(h) \cdot f(\lambda^H_g (h')),~f(g)\circ f(g') \big),\\
			&&\text{since $f$ is a homomorphism of skew left braces}\\
			&=& \big(f(h)\, \lambda^G_{f(g)}(f(h')),~f(g)\circ f(g') \big),\\
			&&\text{since $f$ is a homomorphism of skew left braces}\\
			&=& \big(f(h),f(g) \big)\bullet \big(f(h'),f(g') \big)\\
			&=&\widetilde{f}\big((h,g)\big) \bullet \widetilde{f}\big((h',g')\big)
		\end{eqnarray*}
		and
		\begin{eqnarray*}
			\widetilde{f} \big((h,g)\odot (h',g') \big)&=&\widetilde{f} \big((h\circ h',~h\circ g'\circ h^\dagger\circ g)\big)\\
			&=&	\big(f(h\circ h'),~f(h\circ g'\circ h^\dagger\circ g) \big)\\
			&=& \big(f(h),f(g) \big)\odot \big(f(h'),f(g') \big)\\
			&=&\widetilde{f} \big((h,g)\big)\odot {f} \big((h',g')\big),
		\end{eqnarray*}
		for $g,g', h,h' \in H$. It is now immediate to see that $\mathcal{O}: \mathcal{SB} \to \mathcal{SB}$ is a~functor.
	\end{proof}
	
	In fact, $\mathcal{O}$ maps equivalent extensions to equivalent extensions.
	
	\begin{corollary}\label{extsb-extdoublesb}
		Let $(H,\cdot,\circ)$ be a~skew left brace and $(I,\cdot)$ an abelian group viewed as a~trivial brace. Let
		\[
		\mathcal{E} : \quad  {\bf 1} \longrightarrow (I,\cdot) \stackrel{i}{\longrightarrow}  (E,\cdot,\circ) \stackrel{\pi}{\longrightarrow} (H,\cdot,\circ) \longrightarrow {\bf 1}
		\]
		be an extension of skew left braces with associated action $(\xi,\zeta,\epsilon)$ and
		\[
		\widetilde{\mathcal{E}}: \quad  {\bf 1} \longrightarrow (\widetilde{I},\cdot) \stackrel{\widetilde{i}}{\longrightarrow}  (\widetilde{E},\bullet,\odot) \stackrel{\widetilde{\pi}}{\longrightarrow} (\widetilde{H},\bullet,\odot) \longrightarrow {\bf 1}
		\]
		the induced extension of skew left braces with associated action $(\widetilde{\xi},\widetilde{\zeta},\widetilde{\epsilon})$. Then there is a~map
		$\Theta: \Ext_{(\xi,\zeta,\epsilon)}(H, I) \to \Ext_{(\widetilde{\xi},\widetilde{\zeta},\widetilde{\epsilon})}(\widetilde{H}, \widetilde{I})$ given by $\Theta\big([\mathcal{E}] \big)= [\widetilde{\mathcal{E}}]$.
	\end{corollary}
	
	\begin{proof}
		Note that, since $I$ is a~trivial brace, $\widetilde{I}= I \times I$ is again a~trivial brace. A direct check shows that if
		\[
		\mathcal{E} : \quad  {\bf 1} \longrightarrow (I,\cdot) \stackrel{i}{\longrightarrow}  (E,\cdot,\circ) \stackrel{\pi}{\longrightarrow} (H,\cdot,\circ) \longrightarrow {\bf 1}
		\]
		is an abelian extension of skew left braces, then the induced sequence
		\[
		\widetilde{\mathcal{E}}: \quad  {\bf 1} \longrightarrow (\widetilde{I},\cdot) \stackrel{\widetilde{i}}{\longrightarrow}  (\widetilde{E},\bullet,\odot) \stackrel{\widetilde{\pi}}{\longrightarrow} (\widetilde{H},\bullet,\odot) \longrightarrow {\bf 1}
		\]
		of skew left braces is also exact. Let $s$ be a~set-theoretic section to $\mathcal{E}$. Then, we have that $\widetilde{s}(h,g)=(s(h),s(g))$ is a~set-theoretic section to $\widetilde{\mathcal{E}}$. Let $(\widetilde{\xi}, \widetilde{\zeta}, \widetilde{\epsilon})$ be the associated action of $\widetilde{\mathcal{E}}$. For $a,b\in I$ and $h,g\in H$, we see that
		\begin{eqnarray}
			\widetilde{\xi}_{(h,g)}(a,b)&=&\lambda^{\widetilde{E}}_{(s(h),s(g))}(a,b)\nonumber\\
			&=& \big(1,s(g)^\dagger \circ s(h) \big) \bullet (a,b) \bullet \big(1,s(h)^\dagger\circ s(g)\big), \text{by definition of $\lambda^{\widetilde{E}}_{(s(h),s(g))}$}\nonumber\\
			&=& \big(1,s(g)^\dagger \circ s(h)\big) \bullet \big(a,b\circ s(h)^\dagger \circ s(g)\big)\nonumber\\
			&=& \big(\lambda^E_{s(g)^\dagger}(\lambda^E_{s(h)}(a)),~s(g)^\dagger \circ s(h)\circ b\circ s(h)^\dagger \circ s(g) \big)\nonumber\\
			&=& \big(\xi^{-1}_g (\xi_h (a)),~\epsilon_g (s(h)\circ b\circ s(h)^\dagger) \big), \text{by~\eqref{action1 sb} and~\eqref{action3 sb}}\nonumber\\
			&=& \big(\xi^{-1}_g (\xi_h (a)),~\epsilon_g (\epsilon^{-1}_h (b))\big), \text{using~\eqref{action3 sb}}\nonumber\\
			&=&\big(\xi_{(g^\dagger\circ h)}(a),~\epsilon_{(h^\dagger \circ g)}(b) \big),\label{tilde xi}
		\end{eqnarray}
		where the last equality follows from the fact that $\xi$ is a~homomorphism and $\epsilon$ is an anti-homomorphism.
		
		To compute $\widetilde{\zeta}$, we see that
		\begin{eqnarray}
			\widetilde{\zeta}_{(h,g)}(a,b)&=& (\widetilde{s}(h,g))^{-1} \bullet (a,b) \bullet \widetilde{s}(h,g )\nonumber\\
			&=& (s(h),s(g))^{-1} \bullet (a,b) \bullet (s(h),s(g) )\nonumber\\
			&=& \big(\lambda^E_{s(g)^\dagger}(s(h))^{-1},s(g)^\dagger \big) \bullet \big(a \cdot \lambda^E_b (s(h)),b\circ s(g) \big)\nonumber\\
			&=& \big(\lambda^E_{s(g)^\dagger}(s(h)^{-1} \cdot a~\cdot \lambda^E_b (s(h)) ),~s(g)^\dagger \circ b\circ s(g)\big)\nonumber\\
			&=& \big(\lambda^E_{s(g)^\dagger}(s(h)^{-1} \cdot a~\cdot s(h)\cdot s(h)^{-1}\cdot\lambda^E_b (s(h))),~\epsilon_g (b)\big),~\textrm{by}~\eqref{action3 sb}.\label{tilde zeta 1}
		\end{eqnarray}
		Note that,
		\begin{eqnarray*}
			s(h)^{-1} \cdot \lambda^E_b (s(h))&=& s(h)^{-1}\cdot \big(b^{-1}\cdot(b\circ s(h))\big)\\
			&=&s(h)^{-1}\cdot b^{-1}\cdot s(h)\cdot \lambda^E_{s(h)} \big(s(h)^\dagger\circ b\circ s(h)\big)\\
			&=&\zeta_h (b^{-1})\, \xi_h \big(\epsilon_h (b)\big), \text{using~\eqref{action1 sb},~\eqref{action2 sb} and~\eqref{action3 sb}}.
		\end{eqnarray*}
		Using the above equation in~\eqref{tilde zeta 1}, we get
		\begin{eqnarray}
			\widetilde{\zeta}_{(h,g)}(a,b) &=& \big(\lambda^E_{s(g)^\dagger}(\zeta_h (a)\zeta_h (b^{-1})\xi_h (\epsilon_h (b))),~\epsilon_g (b)\big), \text{using~\eqref{action2 sb}}\nonumber\\
			&=& \big(\xi^{-1}_g \big(\zeta_h (a)\zeta_h (b^{-1})\xi_h (\epsilon_h (b)) \big),~\epsilon_g (b)\big), \text{using~\eqref{action1 sb}.}\label{tilde zeta}
		\end{eqnarray}
		Similarly, to determine $\widetilde{\epsilon}$, we have
		\begin{eqnarray}
			\widetilde{\epsilon}_{(h,g)}(a,b)&=&(\widetilde{s}(h,g))^\dagger\odot (a,b)\odot \widetilde{s}(h,g)\nonumber\\
			&=&(s(h),s(g))^\dagger\odot(a,b)\odot(s(h),s(g))\nonumber\\
			&=&\big(s(h)^\dagger,s(h)^\dagger\circ s(g)^\dagger\circ s(h)\big)\odot \big(a\circ s(h),a\circ s(g)\circ a^\dagger \circ b\big)\nonumber\\
			&=&\big(s(h)^\dagger\circ a\circ s(h),~s(h)^\dagger\circ a\circ s(g)\circ a^\dagger\circ b\circ s(g)^\dagger \circ s(h)\big)\nonumber\\
			&=&\big(\epsilon_h (a),s(h)^\dagger\circ a\circ \epsilon^{-1}_g (a^\dagger \circ b) \circ s(h)\big), \text{using~\eqref{action3 sb}}\nonumber\\
			&=&\big(\epsilon_h (a),~\epsilon_h (a\circ \epsilon^{-1}_g (a^\dagger\circ b))\big).\label{tilde epsilon}
		\end{eqnarray}
		Hence, we conclude that the associated action $(\widetilde{\xi},\widetilde{\zeta},\widetilde{\epsilon})$ of the induced skew left brace is completely determined by the action $(\xi,\zeta,\epsilon)$ of the given skew left brace. A direct check shows that equivalent extensions map to equivalent extensions. By~\cite[Proposition 3.4]{MR4604853}, equivalent extensions induce identical associated actions, and the result follows.
	\end{proof}
	
	Let $(H,\cdot,\circ)$ be a~skew left brace and $I$ be an abelian group viewed as a~trivial brace. We show that there exists a~group homomorphism $\Ho^2_{SB}(H,I)\to\Ho^2_{SB}(\widetilde{H},\widetilde{I})$.
	
	Let $\mathcal{E}$ be an extension of $H$ by $I$ and $s$ be a~set-theoretic section to $\mathcal{E}$. Let $\widetilde{\mathcal{E}}$ be the induced extension of $\widetilde{H}$ by $\widetilde{I}$ and $\widetilde{s}$ given by $\widetilde{s}(h,g)=(s(h),s(g))$ be the induced set-theoretic section to $\widetilde{\mathcal{E}}$. Define $\tau^{(\tau_1,\tau_2 )}\colon \widetilde{H} \times \widetilde{H} \to \widetilde{I}$ by
	\[
	\tau^{(\tau_1,\tau_2)} \big((h_1,g_1),(h_2,g_2)\big)=\widetilde{s} \big((h_1,g_1) \bullet (h_2,g_2)\big)^{-1} \bullet \widetilde{s}(h_1,g_1) \bullet \widetilde{s}(h_2,g_2)
	\]
	for all $g_1, g_2, h_1, h_2 \in H$. Then, we have
	\begin{align}
		\tau^{(\tau_1,\tau_2 )}&\big((h_1,g_1 ),(h_2,g_2 )\big) \nonumber\\
		&= \big(\widetilde{s}(h_1\cdot \lambda^H_{g_1}(h_2 ),g_1\circ g_2 )\big)^{-1} \bullet \big(s(h_1 ),s(g_1 )\big) \bullet \big(s(h_2 ),s(g_2 )\big)\nonumber\\
		&= \big(s(h_1\cdot \lambda^H_{g_1}(h_2 )),s(g_1\circ g_2 )\big)^{-1} \bullet \big(s(h_1 )\cdot \lambda^E_{s(g_1 )}(s(h_2 )),s(g_1 )\circ s(g_2 ) \big)\nonumber\\
		&=\big(\lambda^E_{s(g_1\circ g_2 )^\dagger}(s(h_1\cdot \lambda^H_{g_1}(h_2 )))^{-1},s(g_1\circ g_2 )^\dagger\big) \bullet \big(s(h_1 )\cdot \lambda^E_{s(g_1 )}(s(h_2 )),s(g_1 )\circ s(g_2 )\big)\nonumber\\
		&=\big(\lambda^E_{s(g_1\circ g_2 )^\dagger} \big(s(h_1 \cdot \lambda^H_{g_1}(h_2 ))^{-1}\cdot s(h_1 )\cdot \lambda^E_{s(g_1 )}(s(h_2 ))\big),~s(g_1\circ g_2 )^\dagger\circ s(g_1 )\circ s(g_2 ) \big).\label{tau des}
	\end{align}
	We examine the terms independently. For the first component, we see that
	\begin{align}
		s&\big(h_1\cdot \lambda^H_{g_1}(h_2 )\big)^{-1} \cdot s(h_1 )\cdot \lambda^E_{s(g_1 )}(s(h_2 )) \nonumber\\
		&=\tau_1 (h_1,\lambda^H_{g_1}(h_2 )) \cdot s(\lambda^H_{g_1}(h_2 ))^{-1} \cdot \lambda^E_{s(g_1 )}(s(h_1 )), \text{using~\eqref{tau sb}}\nonumber\\
		&=\tau_1 (h_1,\lambda^H_{g_1}(h_2 ))\cdot (s(g_1^{-1}\cdot (g_1\circ h_2 )))^{-1}\cdot s(g_1 )^{-1} \cdot (s(g_1 )\circ s(h_2 ))\nonumber\\
		&=\tau_1 (h_1,\lambda^H_{g_1}(h_2 ))\cdot \tau_1 (g_1^{-1},g_1\circ h_2 ) \cdot s(g_1\circ h_2 )^{-1} \cdot s(g_1^{-1})^{-1}\cdot s(g_1 )^{-1} \cdot (s(g_1 )\circ s(h_2 )), \nonumber\\
		&\quad \text{ using \eqref{tau sb}}\nonumber\\
		&=\tau_1 (h_1,\lambda^H_{g_1}(h_2 ))\cdot \tau_1 (g_1^{-1},g_1\circ h_2 )\cdot s(g_1\circ h_2 )^{-1}\cdot \tau_1 (g_1,g_1^{-1})^{-1} \cdot (s(g_1 )\circ s(h_2 ))\nonumber\\
		&=\tau_1 (h_1,\lambda^H_{g_1}(h_2 ))\cdot \tau_1 (g_1^{-1},g_1\circ h_2 )\cdot s(g_1\circ h_2 )^{-1} \nonumber\\
		&\quad\cdot \tau_1 (g_1,g_1^{-1})^{-1}\cdot s(g_1\circ h_2 )\cdot s(g_1\circ h_2 )^{-1}\cdot (s(g_1 )\circ s(h_2 ))\nonumber\\
		&=\tau_1 (h_1,\lambda^H_{g_1}(h_2 ))\cdot \tau_1 (g_1^{-1},g_1\circ h_2 )\cdot \zeta_{(g_1\circ h_2 )}\big(\tau_1 (g_1,g_1^{-1})^{-1}\big)\cdot \tau_2 (g_1, h_2 ),\label{tau des1}
	\end{align}
	using~\eqref{tau sb},~\eqref{tildetau sb} and~\eqref{action2 sb}. 
\par

	For the second component, we have
	\begin{eqnarray}
		&& s(g_1\circ g_2 )^\dagger\circ \big(s(g_1 )\circ s(g_2 ) \big)\nonumber\\
		&=&s(g_1\circ g_2 )^\dagger \cdot \lambda^E_{s(g_1\circ g_2 )^\dagger} \big(s(g_1 )\circ s(g_2 ) \big)\nonumber\\
		&=&\lambda_{s(g_1\circ g_2 )^\dagger}(s(g_1\circ g_2 )^{-1}) \cdot \lambda^E_{s(g_1\circ g_2 )^\dagger} \big(s(g_1 )\circ s(g_2 )\big), \text{since $a^\dagger=\lambda^{-1}_a (a^{-1})$,}\nonumber\\
		&=&\lambda_{s(g_1\circ g_2 )^\dagger}\big(s(g_1\circ g_2 )^{-1} \cdot (s(g_1 )\circ s(g_2 ))\big)\nonumber\\
		&=&\xi^{-1}_{(g_1\circ g_2 )}(\tau_2 (g_1,g_2 )).\label{tau des2}
	\end{eqnarray}
	Using~\eqref{tau des1} and~\eqref{tau des2} in~\eqref{tau des}, we get
	\begin{align}
		\tau^{(\tau_1,\tau_2 )}&\big((h_1,g_1 ), \, (h_2,g_2 )\big) \nonumber\\
		&= \left( \xi^{-1}_{(g_1\circ g_2 )}(\tau_1 (h_1,\lambda^H_{g_1}(h_2 )) \cdot \tau_1 (g_1^{-1},g_1\circ h_2 ) \cdot\zeta_{(g_1\circ h_2 )}(\tau_1 (g_1,g_1^{-1})^{-1}) \cdot \tau_2 (g_1, h_2 )), \right. \nonumber \\
		&\qquad \left. \xi^{-1}_{(g_1\circ g_2 )}(\tau_2 (g_1,g_2 )) \right).
		\label{tau 2 prime}
	\end{align}
	Similarly, we define $\tau'^{(\tau_1,\tau_2 )}\colon \widetilde{H} \times \widetilde{H}\to \widetilde{I}$ by
	\[
	\tau'^{(\tau_1,\tau_2)} \big((h_1,g_1),(h_2,g_2)\big)=\big(\widetilde{s}((h_1,g_1)\odot(h_2,g_2))\big)^{-1} \bullet \big(\widetilde{s}(h_1,g_1)\odot \widetilde{s}(h_2,g_2)\big).
	\]
	Then, we have
	\begin{align}
		\tau'^{(\tau_1,\tau_2 )} & \big((h_1,g_1 ),(h_2,g_2 )\big) \nonumber\\
		={ }&{ }\big(\widetilde{s}(h_1\circ h_2, h_1\circ g_2\circ h_1^\dagger\circ g_1 )\big)^{-1} \bullet \big(s(h_1 )\circ s(h_2 ), s(h_1 )\circ s(g_2 )\circ s(h_1 )^\dagger\circ s(g_1 )\big)\nonumber\\
		={ }&{ } \big(s(h_1\circ h_2 ),~s(h_1\circ g_2\circ h_1^\dagger\circ g_1 )\big)^{-1} \bullet \big(s(h_1 )\circ s(h_2 ), s(h_1 )\circ s(g_2 )\circ s(h_1 )^\dagger\circ s(g_1 )\big)\nonumber\\
		={ }&{ }\big(\lambda^E_{s(h_1\circ g_2\circ h_1^\dagger\circ g_1 )^\dagger}(s(h_1\circ h_2 )^{-1}), \, s(h_1\circ g_2\circ h_1^\dagger\circ g_1 )^\dagger\big) \nonumber \\
		&\quad \big(s(h_1 )\circ s(h_2 ), \, s(h_1 )\circ s(g_2 )\circ s(h_1 )^\dagger\circ s(g_1 )\big)\nonumber\\
		={ }&{ }\big(\lambda^E_{s(h_1\circ g_2\circ h_1^\dagger\circ g_1 )^\dagger} (s(h_1\circ h_2 )^{-1}\cdot (s(h_1 )\circ s(h_2 ))), \nonumber\\
		&\quad s(h_1\circ g_2\circ h_1^\dagger\circ g_1 )^\dagger\circ s(h_1 )\circ s(g_2 )\circ s(h_1 )^\dagger\circ s(g_1 ) \big)\nonumber\\
		={ }&{ }\left(\xi^{-1}_{(h_1\circ g_2\circ h_1^\dagger\circ g_1 )}(\tau_2 (h_1,h_2 )), \, s(h_1\circ g_2\circ h_1^\dagger\circ g_1 )^\dagger\circ s(h_1 )\circ s(g_2 )\circ s(h_1 )^\dagger\circ s(g_1 )\right), \label{tilde tau}\nonumber\\
		&\quad\text{using \eqref{tildetau sb} and \eqref{action1 sb}.}
	\end{align}
	Computing the second component, we get
	\begin{eqnarray}
		&& s(h_1\circ g_2\circ h_1^\dagger\circ g_1 )^\dagger\circ s(h_1 )\circ s(g_2 )\circ s(h_1 )^\dagger\circ s(g_1 ) \nonumber\\
		&=&\big(s((h_1\circ g_2 )\circ (h_1^\dagger\circ g_1 ))^\dagger\circ s(h_1\circ g_2 )\circ s(h_1^\dagger\circ g_1 )\big)\circ\nonumber\\
		&& s(h_1^\dagger\circ g_1 )^\dagger\circ \big(s(h_1\circ g_2 )^\dagger\circ s(h_1 )\circ s(g_2 )\big)\circ s(h_1 )^\dagger\circ s(g_1 )\nonumber\\
		&=&\xi^{-1}_{h_1\circ g_2\circ h_1^\dagger\circ g_1}(\tau_2 \big(h_1\circ g_2,h_1^\dagger \circ g_1 ) \big)\circ s(h_1^\dagger\circ g_1 )^\dagger\circ \xi^{-1}_{(h_1\circ g_2 )}\big(\tau_2 (h_1,g_2 )\big)\circ s(h_1 )^\dagger\circ s(g_1 ), \nonumber\\
		&&\text{using \eqref{tau des2}}\nonumber\\
		&=&\xi^{-1}_{h_1\circ g_2\circ h_1^\dagger\circ g_1} \big(\tau_2 (h_1\circ g_2,h_1^\dagger \circ g_1 )\big)\circ s(h_1^\dagger\circ g_1 )^\dagger\circ\xi^{-1}_{(h_1\circ g_2 )}\big(\tau_2 (h_1,g_2 )\big)\circ s(h_1^\dagger\circ g_1 ) \circ \nonumber\\
		&&s(h_1^\dagger\circ g_1 )^\dagger\circ s(h_1 )^\dagger\circ s(g_1 )\nonumber\\
		&=&\xi^{-1}_{h_1\circ g_2\circ h_1^\dagger\circ g_1} \big(\tau_2 (h_1\circ g_2,h_1^\dagger \circ g_1 )\big)\circ \epsilon_{(h_1^\dagger\circ g_1 )} \big(\xi^{-1}_{(h_1\circ g_2 )}(\tau_2 (h_1,g_2 )) \big)\circ\nonumber\\
		&&s(h_1^\dagger\circ g_1 )^\dagger\circ s(h_1^\dagger)\circ \tau_2 (h_1,h_1^\dagger)^\dagger\circ s(g_1 ), \text{using~\eqref{action3 sb}}\nonumber \\
		&=&\xi^{-1}_{h_1\circ g_2\circ h_1^\dagger\circ g_1} \big(\tau_2 (h_1\circ g_2,h_1^\dagger \circ g_1 )\big)\circ \epsilon_{(h_1^\dagger\circ g_1 )}\big(\xi^{-1}_{(h_1\circ g_2 )}(\tau_2 (h_1,g_2 ))\big)\circ\nonumber\\
		&&s(h_1^\dagger\circ g_1 )^\dagger\circ s(h_1^\dagger)\circ s(g_1 )\circ s(g_1 )^\dagger\circ \tau_2 (h_1,h_1^\dagger)^\dagger\circ s(g_1 )\nonumber\\
		&=&\xi^{-1}_{h_1\circ g_2\circ h_1^\dagger\circ g_1}\big(\tau_2 (h_1\circ g_2,h_1^\dagger \circ g_1 )\big)\circ \epsilon_{(h_1^\dagger\circ g_1 )}\big(\xi^{-1}_{(h_1\circ g_2 )}(\tau_2 (h_1,g_2 ))\big)\circ\nonumber\\
		&&\xi^{-1}_{(h_1^\dagger\circ g_1 )}\big(\tau_2 (h_1^\dagger,g_1 )\big)\circ\epsilon_{g_1} \big(\tau_2 (h_1,h_1^\dagger)^\dagger\big),~\text{using~\eqref{action3 sb} and~\eqref{tau des2}.} \label{tilde tau1}
	\end{eqnarray}
	Using~\eqref{tilde tau1} in~\eqref{tilde tau}, we get
	\begin{eqnarray}
		&& \tau'^{(\tau_1,\tau_2 )}\big((h_1,g_1 ),(h_2,g_2 )\big) \nonumber\\
		&=&\big(\xi^{-1}_{(h_1\circ g_2\circ h_1^\dagger\circ g_1 )}(\tau_2 (h_1,h_2 )),~\xi^{-1}_{h_1\circ g_2\circ h_1^\dagger\circ g_1}(\tau_2 (h_1\circ g_2,h_1^\dagger \circ g_1 ))\circ \nonumber\\
		&& \epsilon_{(h_1^\dagger\circ g_1 )}(\xi^{-1}_{(h_1\circ g_2 )}(\tau_2 (h_1,g_2 ))) \circ \xi^{-1}_{(h_1^\dagger\circ g_1 )}(\tau_2 (h_1^\dagger,g_1 )) \circ \epsilon_{g_1}(\tau_2 (h_1,h_1^\dagger)^\dagger)\big)\nonumber\\
		&=&\big(\xi^{-1}_{(h_1\circ g_2\circ h_1^\dagger\circ g_1 )}(\tau_2 (h_1,h_2 )),~\xi^{-1}_{h_1\circ g_2\circ h_1^\dagger\circ g_1} (\tau_2 (h_1\circ g_2,h_1^\dagger \circ g_1 ))\circ\label{tau prime square}\\
		&& \epsilon_{(h_1^\dagger\circ g_1 )}(\xi^{-1}_{(h_1\circ g_2 )}(\tau_2 (h_1,g_2 )))\circ \xi^{-1}_{g_1}(\tau_2 (h_1,h_1^\dagger\circ g_1 ))^{-1}\big),\nonumber \\
		&& \text{using~\eqref{sbcocycle2} and choosing $m_1 =h_1, m_2 = h_1^\dagger \textrm{and}~m_3 = g_1$}.\nonumber
	\end{eqnarray}
	Let $\Upsilon^{-1}: \Ho^2_{SB}(H, I)\rightarrow \Ext_{(\xi, \zeta, \epsilon)}(H, I)$ and $\widetilde{\Upsilon}:\Ext_{(\widetilde{\xi}, \widetilde{\zeta}, \widetilde{\epsilon})}(\widetilde{H}, \widetilde{I}) \rightarrow \Ho^2_{SB}(\widetilde{H}, \widetilde{I})$ be the bijections given by Theorem~\ref{gbij-thm sb}. Let $\Theta: \Ext_{(\xi,\zeta,\epsilon)}(H, I) \to \Ext_{(\widetilde{\xi},\widetilde{\zeta},\widetilde{\epsilon})}(\widetilde{H},\widetilde{I})$ be the map defined in Corollary~\ref{extsb-extdoublesb}. With the preceding set-up, we have the following result.
	
	\begin{proposition}\label{homology homo SLB}
		Let $(H,\cdot,\circ)$ be a~skew left brace and $(I,\cdot)$ be a~module over $(H,\cdot,\circ)$ with respect to the action $(\xi,\zeta,\epsilon)$. Then the map
		\[
		\Omega_{SB}:=\widetilde{\Upsilon}\Theta\Upsilon^{-1}: \Ho^2_{SB}(H, I)\to\Ho^2_{SB}(\widetilde{H}, \widetilde{I})
		\]
		is a~homomorphism of groups.
	\end{proposition}
	
	\begin{proof}
		The map $\Omega_{SB}$ is explicitly given by $\Omega_{SB}([\tau_1,\tau_2 ])=[\tau^{(\tau_1,\tau_2 )},\tau'^{(\tau_1,\tau_2 )}]$, where $\tau^{(\tau_1,\tau_2 )}$ and $\tau'^{(\tau_1,\tau_2 )}$ are as described in~\eqref{tau 2 prime} and~\eqref{tau prime square}, respectively.	Let $[\tau_1, \tau_2 ]$ and $[\mu_1,\mu_2 ]$ be elements in $\Ho^2_{SB}(H,I)$. Then
		\[
		\Omega_{SB} \big( [\tau_1,\tau_2] \, [\mu_1,\mu_2] \big) = \Omega_{SB} \big( [\tau_1 \mu_1, \tau_2 \mu_2]\big) =\big[\tau^{(\tau_1 \mu_1, \tau_2 \mu_2)},\tau'^{(\tau_1 \mu_1, \tau_2\mu_2)} \big].
		\]
		It is easy to see that
		\[
		\tau^{(\tau_1 \mu_1, \tau_2 \mu_2)}=	\tau^{(\tau_1, \tau_2)}\, \tau^{(\mu_1,\mu_2)} \quad \textrm{and} \quad  \tau'^{(\tau_1 \mu_1,\tau_2 \mu_2)}=	\tau'^{(\tau_1, \tau_2)}\, \tau'^{(\mu_1, \mu_2)}.
		\]
		Hence, we obtain
		\[
		\Omega_{SB} \big( [\tau_1, \tau_2] \, [\mu_1,\mu_2] \big) = \big[\tau^{(\tau_1, \tau_2)},\tau'^{(\tau_1, \tau_2)}\big] \, \big[\tau^{(\mu_1, \mu_2)},\tau'^{(\mu_1,\mu_2)}\big]\\
		= \Omega_{SB} \big( [\tau_1, \tau_2]\big) \, \Omega_{SB} \big([\mu_1,\mu_2] \big),
		\]
		which shows that $\Omega_{SB}$ is a~homomorphism of groups.
	\end{proof}

	\section{Commutative diagram of second cohomology groups}\label{section comm diag homol groups}
	
	Let $(H,\cdot,\circ)$ be a skew left brace and $I$ an abelian group viewed as a trivial brace. Let $\mathcal{H}=(H^{(\cdot)},H^{(\circ)},\lambda^H, \Id_H)$ and  $\mathcal{I}=(I^{(\cdot)},I^{(\cdot)},\lambda^I, \Id_I)$, viewed as an $\mathcal{H}-$module with respect to the action $(\nu,\mu,\sigma,f)$. By \cite[Proposition 4.4 and Corollary 4.6]{MR4819014}, there is a group isomorphism $\Psi: \Ho^2_{RRB}(\mathcal{H},\mathcal{I})\to \Ho^2_{SB}(H,I)$ defined by
	\begin{eqnarray}\label{defn of Psi}
		\Psi \big([\tau_1,\tau_2,\rho,\chi] \big)&=&[\tau_1, \tau^{(\lambda^H, \Id_H)}_1\rho^{\Id_H}\chi^{(\Id_H, f)} ],
	\end{eqnarray}
	where
	\begin{eqnarray}
		\tau^{(\lambda^H, \Id_H)}_1(h_1, h_2) &=&  \tau_1 \big(h_1, \lambda^H_{h_1}(h_2)\big), \nonumber\\
		\rho^{\Id_H}(h_1, h_2) &=&\rho(h_2, h_1)~\textrm{and} \nonumber\\
		\chi^{(\Id_H, f)}(h_1, h_2) & =& \nu_{h_1} \big( f(\chi(h_1), h_2) \big) \label{defn of chi id f}
	\end{eqnarray}
	for all $h_1, h_2 \in H$. Here, the cohomology $\Ho^2_{SB}(H,I)$ is with respect to the induced action $(\xi,\zeta,\epsilon)$, where $\xi=\nu$, $\zeta=\mu$ and $\epsilon=\sigma$.
	
	Similarly, by \cite[Proposition 4.8]{MR4644858}, for the Rota--Baxter group  $\widetilde{H}$   and $\widetilde{I}$ viewed as an $\widetilde{H}-$module with associated action $\gamma$,  there is a group homomorphism
	\[
	\widetilde{\Psi}:\Ho^2_{RB}(\widetilde{H},\widetilde{I})\to\Ho^2_{SB}(\widetilde{H},\widetilde{I})
	\quad \text{defined by} \quad
	\widetilde{\Psi}\big([\tau,r]\big)=[\tau,\beta],
	\]
	where
	\begin{align}
		\beta \big((h_1,g_1),(h_2,g_2) \big)
		={ }&{ }\tau\big((h_1,g_1),i_{\widetilde{R}(h_1,g_1)} (h_2,g_2)\big)~\tau \big(\widetilde{R}(h_1,g_1) \bullet (h_2,g_2),
		(\widetilde{R}(h_1,g_1))^{-1}\big)~ \nonumber\\
		&\gamma_{(\widetilde{R}(h_1,g_1))^{-1}} \big(\tau \big(\widetilde{R}(h_1,g_1),(h_2,g_2) \big)\, \gamma_{(h_2,g_2)} \big(r(h_1,g_1)\big)~\big(r(h_1,g_1)\big)^{-1} \big)~\nonumber\\
		&\big(\tau \big(\widetilde{R}(h_1,g_1),(\widetilde{R}(h_1,g_1))^{-1} \big)\big)^{-1}\label{beta definition}
	\end{align}
	and $i_x$ is the inner automorphism given by $i_x(y) = xy x^{-1}$. Here, the cohomology $\Ho^2_{SB}(\widetilde{H},\widetilde{I})$ is with respect to the induced associated action $(\widetilde{\xi},\widetilde{\zeta},\widetilde{\epsilon})$. With the preceding set-up, we prove the following result.
	
	\begin{theorem}\label{theorem comm diag homol groups}
		Let $(H,\cdot,\circ)$ be a skew left brace and $I$ an abelian group viewed as a trivial brace. Let $\mathcal{H}=(H^{(\cdot)},H^{(\circ)},\lambda^H, \Id_H)$ and  $\mathcal{I}=(I^{(\cdot)},I^{(\cdot)},\lambda^I, \Id_I)$ viewed as an $\mathcal{H}-$module with respect to the action $(\nu,\mu,\sigma,f)$. Then the diagram
		\begin{align}
			\begin{CD}\label{commdiagram1}
				\Ho^2_{RRB}(\mathcal{H},\mathcal{I}) @>\Omega_{RB}>> \Ho^2_{RB}(\widetilde{H},\widetilde{I}) \\
				@V{\Psi}VV	@V{\widetilde{\Psi}}VV \\
				\Ho^2_{SB}(H,I) @>\Omega_{SB}>>\Ho^2_{SB}(\widetilde{H},\widetilde{I})
			\end{CD}
		\end{align}
		commutes, where $\Ho^2_{RB}$ and $\Ho^2_{SB}$ are the cohomologies with respect to the induced module structures.
	\end{theorem}
	
	\begin{proof}
		We first establish our notation for clarity. Let $\Ho^2_{RRB}(\mathcal{H},\mathcal{I})$ be the second cohomology of  $\mathcal{H}$ with coefficients in the module  $\mathcal{I}$ with respect to the action $(\nu,\mu,\sigma,f)$, where the map $f$ is trivial. 	 For $[\tau_1,\tau_2,\rho,\chi]\in \Ho^2_{RRB}(\mathcal{H},\mathcal{I})$, we write
		$$\Psi \big([\tau_1,\tau_2,\rho,\chi] \big)=[\tau_1,\hat{\tau_2}] \in \Ho^2_{SB}(H,I),$$
		where the cohomology $\Ho^2_{SB}(H,I)$ is with respect to the induced action $(\xi,\zeta,\epsilon)$. Setting $\varphi=(\tau_1,\hat{\tau_2})$, we write
		$$\Omega_{SB}\big([\tau_1,\hat{\tau_2}] \big)=[\tau^\varphi,\tau'^\varphi] \in\Ho^2_{SB}(\widetilde{H},\widetilde{I}),$$
		with the induced associated action $(\tilde{\xi},\tilde{\zeta},\tilde{\epsilon})$ as defined in \eqref{tilde xi}, \eqref{tilde zeta} and \eqref{tilde epsilon}.
		
		In view of Proposition \ref{cohom RRB to cohom RB}, for $[\tau_1,\tau_2,\rho,\chi]\in \Ho^2_{RRB}(\mathcal{H},\mathcal{I})$, setting $\kappa=(\tau_1,\tau_2,\rho,\chi)$, we write
		$$\Omega_{RB} \big([\tau_1,\tau_2,\rho,\chi]\big)= \big[\tau^{\kappa},r^{\kappa}\big] \in \Ho^2_{RB}(\widetilde{H},\widetilde{I}),$$
		where the cohomology $\Ho^2_{RB}(\widetilde{H},\widetilde{I})$  is with respect to the induced action $\gamma$ given by \eqref{gamma action rb}. Further, for $\big[\tau^{\kappa},r^{\kappa}\big]\in \Ho^2_{RB}(\widetilde{H},\widetilde{I})$, we write
		$$\widetilde{\Psi}\big([\tau^{\kappa},r^{\kappa}]\big)=[\tau^\kappa,\beta^{r^\kappa}] \in\Ho^2_{SB}(\widetilde{H},\widetilde{I}),$$
		where the cohomology $\Ho^2_{SB}(\widetilde{H},\widetilde{I})$ is with respect to the induced action  $(\widetilde{\xi},\widetilde{\zeta},\widetilde{\epsilon})$.  It is a direct observation that  the induced action defining the cohomology  $\Ho^2_{SB}(\widetilde{H},\widetilde{I})$ is identical from both the directions of the diagram \eqref{commdiagram1}.
		
		Let $\widetilde{R}$ be the induced Rota--Baxter operator on $\widetilde{H}$ as given in \eqref{RB operator on semi-direct product}. Using \eqref{beta definition}, we compute the terms of $\beta^{r^\kappa} \big((h_1,g_1),(h_2,g_2)\big)$ individually. First, we have
		\begin{small}
			\begin{eqnarray}
				&& 	\tau^{\kappa}\big((h_1,g_1),i_{\widetilde{R}(h_1,g_1)}(h_2,g_2)\big) \nonumber\\
				&=&\tau^{\kappa}\big((h_1,g_1),(1,g_1^\dagger\circ h_1) \bullet (h_2,g_2) \bullet (1,h_1^\dagger\circ g_1)\big)\nonumber\\
				&=&\tau^{\kappa}\big((h_1,g_1),(\lambda^H_{g_1^\dagger\circ h_1}(h_2),g_1^\dagger\circ h_1\circ g_2) \bullet (1,h_1^\dagger\circ g_1)\big)\nonumber\\
				&=&\tau^{\kappa}\big((h_1,g_1),(\lambda^H_{g_1^\dagger\circ h_1}(h_2),g_1^\dagger\circ h_1\circ g_2\circ h_1^\dagger\circ g_1)\big)\nonumber\\
				& =&\big(\nu^{-1}_{(h_1\circ g_2\circ h_1^\dagger\circ g_1)}\big(\tau_1(h_1,\lambda^H_{g_1}(\lambda^H_{g_1^\dagger\circ h_1}(h_2))) \,\rho(\lambda^H_{g_1^\dagger\circ h_1}(h_2),g_1)\big), \tau_2(g_1,g_1^\dagger\circ h_1\circ g_2\circ h_1^\dagger\circ g_1)\big), \nonumber\\
				&&\text{using  \eqref{tau 1 prime}}\nonumber\\
				& =&\big(\nu^{-1}_{(h_1\circ g_2\circ h_1^\dagger\circ g_1)}\big(\tau_1(h_1,\lambda^H_{ h_1}(h_2))\, \rho(\lambda^H_{g_1^\dagger\circ h_1}(h_2),g_1)\big), \tau_2(g_1,g_1^\dagger\circ h_1\circ g_2\circ h_1^\dagger\circ g_1)\big).\label{beta term 1}
			\end{eqnarray}
		\end{small}
		
		Second, we have
		\begin{eqnarray}
			&&	\tau^\kappa\big(\widetilde{R}(h_1,g_1) \bullet (h_2,g_2),(\widetilde{R}(h_1,g_1))^{-1}\big) \nonumber\\
			&=&\tau^\kappa\big((1,g_1^\dagger\circ h_1) \bullet (h_2,g_2),(1,h_1^\dagger\circ g_1)\big)\nonumber\\
			&=&\tau^\kappa\big((\lambda^H_{g_1^\dagger\circ h_1}(h_2),g_1^\dagger\circ h_1\circ g_2),(1,h_1^\dagger\circ g_1)\big)\nonumber\\
			&=& \big(\nu^{-1}_{(g_1^\dagger\circ h_1\circ g_2\circ h_1^\dagger\circ g_1)}\big(\tau_1(\lambda^H_{g_1^\dagger\circ h_1}(h_2),1) \, \rho(1,g_1^\dagger\circ h_1\circ g_2)\big), \tau_2(g_1^\dagger\circ h_1\circ g_2,h_1^\dagger\circ g_1) \big),\nonumber\\
			&&\text{using \eqref{tau 1 prime}}\nonumber\\
			&=& \big(1,\tau_2(g_1^\dagger\circ h_1\circ g_2,h_1^\dagger\circ g_1) \big),~\text{since ~$\tau_1(\lambda^H_{g_1^\dagger\circ h_1}(h_2),1)=1=\rho(1,g_1^\dagger\circ h_1\circ g_2)$}.\label{beta term 2}
		\end{eqnarray}
		Third, we have
		\begin{eqnarray}
			&& \tau^\kappa\big(\widetilde{R}(h_1,g_1),(h_2,g_2)\big) \nonumber\\
			&=&\tau^\kappa\big((1,g_1^\dagger\circ h_1),(h_2,g_2)\big)\nonumber\\
			&=&\big(\nu^{-1}_{g_1^\dagger\circ h_1\circ g_2}\big(\tau_1(1,\lambda^H_{g_1^\dagger\circ h_1}(h_2)) \, \rho(h_2,g_1^\dagger\circ h_1)\big),\tau_2(g_1^\dagger\circ h_1,g_2)\big),\text{using \eqref{tau 1 prime}}\nonumber\\
			&=&\big(\nu^{-1}_{g_1^\dagger\circ h_1\circ g_2}\big(\rho(h_2,g_1^\dagger\circ h_1)\big),\tau_2(g_1^\dagger\circ h_1,g_2)\big), ~\text{since ~$\tau_1(1,\lambda^H_{g_1^\dagger\circ h_1}(h_2))=1$.}\label{beta term 3.1}
		\end{eqnarray}
		Using \eqref{beta term 3.1}, we get
		\begin{eqnarray}
			&& \gamma_{(\widetilde{R}(h_1,g_1))^{-1}} \big(\tau^\kappa \big(\widetilde{R}(h_1,g_1),(h_2,g_2) \big)\big)\nonumber\\
			&=&\gamma_{(1,h_1^\dagger\circ g_1)}\big(\nu^{-1}_{g_1^\dagger\circ h_1\circ g_2}\big(\rho(h_2,g_1^\dagger\circ h_1)\big),\tau_2(g_1^\dagger\circ h_1,g_2)\big)\nonumber\\
			&=& \big(\nu^{-1}_{h_1^\dagger\circ g_1}\big(\mu_1(\nu^{-1}_{g_1^\dagger\circ h_1\circ g_2}\big(\rho(h_2,g_1^\dagger\circ h_1)) \, f(\tau_2(g_1^\dagger\circ h_1,g_2),1) \big),\, \sigma_{h_1^\dagger\circ g_1}(\tau_2(g_1^\dagger\circ h_1,g_2))\big), \nonumber\\
			&&\text{ using \eqref{gamma action rb}}\nonumber\\
			\quad \quad &=&\big(\nu^{-1}_{h_1^\dagger\circ g_1}(\nu^{-1}_{g_1^\dagger\circ h_1\circ g_2}\big(\rho(h_2,g_1^\dagger\circ h_1))),\,\sigma_{h_1^\dagger\circ g_1}(\tau_2(g_1^\dagger\circ h_1,g_2))\big),\label{beta term 3}\\
			&&  ~\textrm{since $\mu_1=\Id_I$ and $f(\tau_2(g_1^\dagger\circ h_1,g_2),1)=1$.}\nonumber
		\end{eqnarray}
		Considering the remaining three terms, we have
		\begin{eqnarray}
			&& \gamma_{(\widetilde{R}(h_1,g_1))^{-1}} \big( \gamma_{(h_2,g_2)}r^\kappa(h_1,g_1)\big)\nonumber\\
			&=&\gamma_{(1,h_1^\dagger\circ g_1)} \big(\gamma_{(h_2,g_2)}\big(1,\tau_2(g_1,g_1^\dagger\circ h_1)^{-1}\chi(h_1)\big)\big),\text{ using \eqref{g cocycle term}}\nonumber\\
			&=& \gamma_{(1,h_1^\dagger\circ g_1)} \big(\nu^{-1}_{g_2} \big(\mu_{h_2}(1)f(\tau_2(g_1,g_1^\dagger \circ h_1)^{-1}\chi(h_1),h_2)\big),\, \sigma_{g_2}(\tau_2(g_1,g_1^\dagger\circ h_1)^{-1}\chi(h_1))\big),\nonumber\\
			&&\text{ using \eqref{gamma action rb}}\nonumber\\
			&=& \gamma_{(1,h_1^\dagger\circ g_1)} \big(\nu^{-1}_{g_2} \big(f(\tau_2(g_1,g_1^\dagger\circ h_1)^{-1}\chi(h_1),h_2)\big), \, \sigma_{g_2}(\tau_2(g_1,g_1^\dagger\circ h_1)^{-1}\chi(h_1))\big)\nonumber\\
			\quad &=&\big(\nu^{-1}_{g_2\circ h_1^{\dagger}\circ g_1}(f(\tau_2(g_1,g_1^\dagger\circ h_1)^{-1}\chi(h_1),h_2)),\sigma_{g_2\circ h_1^{\dagger}\circ g_1}(\tau_2(g_1,g_1^\dagger\circ h_1)^{-1}\chi(h_1))\big)\label{beta term 4},\\
			&&\text{ using \eqref{gamma action rb}.}\nonumber
		\end{eqnarray}
		Using \eqref{gamma action rb} and \eqref{g cocycle term}, we have
		\begin{eqnarray}
			\gamma_{(\widetilde{R}(h_1,g_1))^{-1}} \big((r^\kappa(h_1,g_1))^{-1}\big)
			= \big(1,\sigma_{h_1^{\dagger}\circ g_1}(\chi(h_1)^{-1}\tau_2(g_1,g_1^\dagger\circ h_1)) \big) \label{beta term 5},
		\end{eqnarray}
		and
		\begin{eqnarray}
			&&\tau^\kappa\big(\widetilde{R}(h_1,g_1),(\widetilde{R}(h_1,g_1))^{-1}\big)^{-1}\nonumber\\
			&=&\tau^\kappa\big((1,g_1^\dagger\circ h_1),(1,h_1^\dagger\circ g_1)\big)^{-1}\nonumber\\
			&=&\big(1,\tau_2(g_1^\dagger\circ h_1,h_1^\dagger\circ g_1) \big)^{-1},\text{ using \eqref{tau 1 prime}}\nonumber\\
			&=& \big(1,\tau_2(g_1^\dagger\circ h_1,h_1^\dagger\circ g_1)^{-1} \big).\label{beta term 6}
		\end{eqnarray}
		
		Using \eqref{beta term 1}, \eqref{beta term 2}, \eqref{beta term 3}, \eqref{beta term 4}, \eqref{beta term 5} and  \eqref{beta term 6}, we can write
		\begin{equation*}
			\beta^{r^\kappa} \big((h_1,g_1),(h_2,g_2) \big)=\big(\beta^{r^\kappa}_1 \big((h_1,g_1),(h_2,g_2) \big),\, \beta^{r^\kappa}_2 \big((h_1,g_1),(h_2,g_2) \big)\big),
		\end{equation*}
		where
		\begin{eqnarray}
			\beta^{r^\kappa}_1 \big((h_1,g_1),(h_2,g_2) \big)&=&\nu_{(g_1^\dagger\circ h_1\circ g_2^\dagger\circ h_1^\dagger)}\big(\tau_1(h_1,\lambda^H_{ h_1}(h_2))\,\rho(\lambda^H_{g_1^\dagger\circ h_1}(h_2),g_1) \big)\nonumber\\
			&&\nu_{(g_1^\dagger\circ h_1\circ g_2^\dagger\circ h_1^\dagger )} \big(\nu_{g_1}(\rho(h_2,g_1^\dagger\circ h_1)) \big)\nonumber\\
			&&\nu^{-1}_{g_2\circ h_1^{\dagger}\circ g_1} \big(f(\tau_2(g_1,g_1^\dagger\circ h_1)^{-1}\chi(h_1),h_2)\big)\label{beta 1 exp}
		\end{eqnarray}
		and
		\begin{eqnarray*}
			\beta^{r^\kappa}_2 \big((h_1,g_1),(h_2,g_2) \big)&=&\tau_2(g_1,g_1^\dagger\circ h_1\circ g_2\circ h_1^\dagger\circ g_1) \, \tau_2(g_1^\dagger\circ h_1\circ g_2,h_1^\dagger\circ g_1)\nonumber\\
			&&\sigma_{h_1^\dagger\circ g_1} \big(\tau_2(g_1^\dagger\circ h_1,g_2) \big) \, \sigma_{g_2\circ h_1^{\dagger}\circ g_1}\big(\tau_2(g_1,g_1^\dagger\circ h_1)^{-1}\chi(h_1)\big)\nonumber\\
			&&\sigma_{h_1^{\dagger}\circ g_1}\big(\chi(h_1)^{-1}\tau_2(g_1,g_1^\dagger\circ h_1)\big)\, \tau_2(g_1^\dagger\circ h_1,h_1^\dagger\circ g_1)^{-1}.\label{beta 2 exp}
		\end{eqnarray*}
		We now simplify these expressions. Using appropriate substitutions, we get
		\begin{eqnarray*}
			&& \nu_{(g_1^\dagger\circ h_1\circ g_2^\dagger\circ h_1^\dagger)} \big(\rho(\lambda^H_{g_1^\dagger\circ h_1}(h_2),g_1)\, \nu_{g_1}(\rho(h_2,g_1^\dagger\circ h_1)) \big)\\
			&=&\nu_{(g_1^\dagger\circ h_1\circ g_2^\dagger\circ h_1^\dagger)} \big(\rho(h_2,h_1)\, \nu_{h_1}(f(\tau_2(g_1,g_1^\dagger\circ h_1),h_2))\big),~\text{using \eqref{cocycle3}}\\
			&=&\nu_{(g_1^\dagger\circ h_1\circ g_2^\dagger\circ h_1^\dagger)} \big(\rho(h_2,h_1)\big)\nu^{-1}_{g_2\circ h_1^\dagger\circ g_1}\big( (f(\tau_2(g_1,g_1^\dagger\circ h_1),h_2))\big).\\
		\end{eqnarray*}
		
		Using the preceding equality in \eqref{beta 1 exp}, we get
		\begin{eqnarray}\label{final beta 1 exp}
			&& \beta^{r^\kappa}_1 \big((h_1,g_1),(h_2,g_2) \big)\nonumber\\
			&=& \nu_{(g_1^\dagger\circ h_1\circ g_2^\dagger\circ h_1^\dagger)} \big(\tau_1(h_1,\lambda^H_{ h_1}(h_2))\rho(h_2,h_1)\big) \, \nu_{g_1^{\dagger}\circ h_1 \circ g_2^{\dagger}} \big(f(\chi(h_1),h_2)\big),\label{beta 1 final exp}\\
			&&\text{using Lemma \eqref{properties of f}(2).}\nonumber
		\end{eqnarray}
		For the expression $\beta^{r^\kappa}_2$, we have
		\begin{eqnarray}
			&&\beta^{r^\kappa}_2 \big((h_1,g_1),(h_2,g_2) \big)\nonumber\\
			&=&\tau_2(g_1,g_1^\dagger\circ h_1\circ g_2\circ h_1^\dagger\circ g_1) \, \tau_2(g_1^\dagger\circ h_1\circ g_2,h_1^\dagger\circ g_1)\nonumber\\
			&&\sigma_{h_1^\dagger\circ g_1} \big(\tau_2(g_1^\dagger\circ h_1,g_2)\big) \, \sigma_{g_2\circ h_1^{\dagger}\circ g_1}\big(\tau_2(g_1,g_1^\dagger\circ h_1)^{-1}\chi(h_1)\big)\nonumber\\
			&&\sigma_{h_1^{\dagger}\circ g_1}\big(\chi(h_1)^{-1}\tau_2(g_1,g_1^\dagger\circ h_1) \big) \, \tau_2(g_1^\dagger\circ h_1,h_1^\dagger\circ g_1)^{-1}\nonumber\\
			&=&\tau_2(h_1\circ g_2, h_1^{\dagger}\circ g_1)\, \sigma_{h_1^{\dagger}\circ g_1} \big(\tau_2(g_1,g_1^{\dagger}\circ h_1\circ g_2)\big)\nonumber\\
			&&\sigma_{h_1^\dagger\circ g_1} \big(\tau_2(g_1^\dagger\circ h_1,g_2)\big)\, \sigma_{g_2\circ h_1^{\dagger}\circ g_1}\big(\tau_2(g_1,g_1^\dagger\circ h_1)^{-1}\chi(h_1)\big)\nonumber\\
			&&\sigma_{h_1^{\dagger}\circ g_1}\big(\chi(h_1)^{-1}\, \tau_2(g_1,g_1^\dagger\circ h_1) \big)\, \tau_2(g_1^\dagger\circ h_1,h_1^\dagger\circ g_1)^{-1},\nonumber\\
			&&\text{using \eqref{cocycle2} with appropriate substitution}\nonumber\\
			&=&\tau_2(h_1\circ g_2, h_1^{\dagger}\circ g_1)\, \sigma_{h_1^{\dagger}\circ g_1} \big(\tau_2(h_1,g_2)\big)\nonumber\\
			&&\sigma_{h_1^{\dagger}\circ g_1} \big(\sigma_{g_2}(\chi(h_1))\chi(h_1)^{-1}\big)\, \tau_2(h_1,h_1^{\dagger}\circ g_1)^{-1},\label{beta 2 final exp}\\
			&&\text{using \eqref{cocycle2} twice with appropriate substitutions}.\nonumber
		\end{eqnarray}
		Note that, for $[\tau_1,\tau_2,\rho,\chi]\in \Ho^2_{RRB}(\mathcal{H},\mathcal{I})$, we have $[\tau^\kappa,\beta^{r^\kappa}]=\widetilde{\Psi} \, \Omega_{RB}\big ([\tau_1,\tau_2,\rho,\chi]\big)$ and $[\tau^\varphi,\tau'^\varphi]=\Omega_{SB} \, \Psi \big([\tau_1,\tau_2,\rho,\chi]\big)$. If $[\tau^\varphi,\tau'^\varphi]^{-1}$ denotes the inverse of $[\tau^\varphi,\tau'^\varphi]$ in $\Ho^2_{SB}(\widetilde{H},\widetilde{I})$, then we have
		\begin{equation}
			[\tau^\kappa,\beta^{r^\kappa}][\tau^\varphi,\tau'^\varphi]^{-1}=[\tau^\kappa( \tau^\varphi)^{-1},\beta^{r^\kappa} (\tau'^\varphi)^{-1}].
		\end{equation}
		We claim that $(\tau^\kappa( \tau^\varphi)^{-1},\, \beta^{r^\kappa} (\tau'^\varphi)^{-1}) \in \Z_{SB}^2(\widetilde{H},\widetilde{I})$.  First, we see that
		\begin{align*}
			\tau^\kappa & \big((h_1,g_1),(h_2,g_2)\big) \big(\tau^\varphi((h_1,g_1),(h_2,g_2))\big)^{-1}\\
			&=\big(\nu^{-1}_{g_1\circ g_2}\big(\tau_1(h_1,\lambda^H_{g_1}(h_2))\rho(h_2,g_1)\big),\tau_2(g_1,g_2)\big)\\
			&\quad\ \big(\xi^{-1}_{(g_1\circ g_2)} \big(\tau_1(h_1,\lambda^H_{g_1}(h_2))\tau_1(g_1^{-1},g_1\circ h_2)\zeta_{(g_1\circ h_2)}(\tau_1(g_1,g_1^{-1})^{-1})\hat{\tau_2}(g_1, h_2)\big),\\
			&\qquad\ \xi^{-1}_{(g_1\circ g_2)}(\hat{\tau_2}(g_1,g_2))\big)^{-1}, \text{using \eqref{tau 1 prime} and \eqref{tau 2 prime},}
		\end{align*}
		for all $h_1, h_2, g_1, g_2 \in H$. By \eqref{defn of Psi}, we have
		\begin{equation}\label{boundary eq 1}
			\hat{\tau_2}(g_1, h_2)=\tau_1(g_1,\lambda^H_{g_1}(h_2))\, \rho(h_2,g_1)\, \nu_{g_1}(f(\chi(g_1),h_2)).
		\end{equation}
		Using \eqref{cocycle1} with appropriate substitutions, we get
		\begin{equation}\label{boundary eq 2}
			\tau_1(g_1^{-1},g_1\circ h_2)\, \mu_{(g_1\circ h_2)}(\tau_1(g_1,g_1^{-1})^{-1}) \, \tau_1(g_1,\lambda^H_{g_1}(h_2))=1
		\end{equation}
		By \cite[Corollary 4.2 and Proposition 4.3]{MR4819014}, we have $\xi_h=\nu_h$ , $\zeta_h=\mu_h $ and  $\sigma_h=\epsilon_h$ for all $h\in H$. This gives
		\begin{align*}
			\big(&\nu^{-1}_{g_1\circ g_2}\big(\tau_1(h_1,\lambda^H_{g_1}(h_2))\rho(h_2,g_1)\big)\big)\\ &\big(\xi^{-1}_{(g_1\circ g_2)}(\tau_1(h_1,\lambda^H_{g_1}(h_2))\tau_1(g_1^{-1},g_1\circ h_2)	\zeta_{(g_1\circ h_2)}(\tau_1(g_1,g_1^{-1})^{-1})	\hat{\tau_2}(g_1, h_2)) \big)^{-1} \\
			&\!\!\!\!=\big(\nu^{-1}_{g_1\circ g_2}\big(\tau_1(h_1,\lambda^H_{g_1}(h_2))\rho(h_2,g_1)\big)\big) \\ &\quad\big(\nu^{-1}_{(g_1\circ g_2)}(\tau_1(h_1,\lambda^H_{g_1}(h_2))\tau_1(g_1^{-1},g_1\circ h_2) \mu_{(g_1\circ h_2)}(\tau_1(g_1,g_1^{-1})^{-1})	\hat{\tau_2}(g_1, h_2))	\big)^{-1}\\
			&\!\!\!\!=\big(\nu_{g_2^\dagger}(f(\chi(g_1),h_2))\big)^{-1}, \text{ using \eqref{boundary eq 1} and \eqref{boundary eq 2}.}
		\end{align*}
		Similarly, using \eqref{boundary eq 1} and \eqref{cocycle5}, we get
		\begin{eqnarray}
			\tau_2(g_1,g_2) \, \big(\xi^{-1}_{(g_1\circ g_2)}(	\hat{\tau_2}(g_1,g_2)) \big)^{-1}=\chi(g_2)^{-1} \, \chi(g_1\circ g_2) \, \sigma_{g_2}(\chi(g_1)^{-1}).\label{tau tau tilde rel}
		\end{eqnarray}
		Thus, we have
		\begin{eqnarray}
			&&\tau^\kappa \big((h_1,g_1),(h_2,g_2)\big) \, \big(\tau^\varphi((h_1,g_1),(h_2,g_2))\big)^{-1}\nonumber\\
			&=&\big( \big(\nu_{g_2^\dagger}(f(\chi(g_1),h_2))\big)^{-1},\,	\chi(g_2)^{-1}\chi(g_1\circ g_2)\sigma_{g_2}(\chi(g_1)^{-1})\big).\label{tau 1 prime tau 2 prime inverse 1}
		\end{eqnarray}
		
		Similarly, using \eqref{tau prime square}, \eqref{beta 1 final exp} and \eqref{beta 2 final exp}, we see that:
		\[
		\beta^{r^\kappa}\left( (h_1,g_1), \, (h_2,g_2) \right) \, \left(\tau'^\varphi((h_1,g_1), (h_2,g_2))\right)^{-1} = (h, g),
		\]
		where
		\begin{align*}
			h = { }&{ }\nu_{(g_1^\dagger\circ h_1\circ g_2^\dagger\circ h_1^\dagger)} \, (\tau_1(h_1,\lambda^H_{h_1}(h_2)) \rho(h_2,h_1)) \\&{ }\nu_{g_1^{\dagger} \circ h_1 \circ g_2^{\dagger}}(f(\chi(h_1),h_2)) \, \left(\xi^{-1}_{(h_1\circ g_2\circ h_1^\dagger\circ g_1)} (\hat{\tau_2}(h_1,h_2))\right)^{-1}
		\end{align*}
		and
		\begin{align*}
			g = { }&{ } \tau_2(h_1\circ g_2, h_1^{\dagger}\circ g_1) \, \sigma_{h_1^{\dagger}\circ g_1}(\tau_2(h_1,g_2)) \, \sigma_{h_1^{\dagger}\circ g_1}(\sigma_{g_2} (\chi(h_1)) \chi(h_1)^{-1}) \,  \tau_2(h_1,h_1^{\dagger}\circ g_1)^{-1} \\
			& \xi^{-1}_{h_1\circ g_2\circ h_1^\dagger\circ g_1} (\hat{\tau_2}(h_1\circ g_2,h_1^\dagger \circ g_1))\circ \epsilon_{(h_1^\dagger\circ g_1 )}(\xi^{-1}_{(h_1\circ g_2)}(\hat{\tau_2}(h_1,g_2)))\circ \xi^{-1}_{g_1}(\hat{\tau_2}(h_1,h_1^\dagger\circ g_1))^{-1}\big)^{-1}\!.
		\end{align*}
		Using \eqref{boundary eq 1},  and the fact that $\xi_h=\nu_h$ for all $h\in H$, the first component becomes trivial. Further, by multiple use of \eqref{tau tau tilde rel}, we obtain
		\begin{eqnarray}
			&&	\beta^{r^\kappa} \big((h_1,g_1),(h_2,g_2) \big)\,  \big(\tau'^\varphi((h_1,g_1),(h_2,g_2)) \big)^{-1} \nonumber\\
			&=&  \big(1,\, \chi(g_1)^{-1}\chi(h_1\circ g_2\circ h_1^{\dagger}\circ g_1) \, \sigma_{h_1^{\dagger}\circ g_1}(\chi(g_2))^{-1}  \big). \label{tau 1 prime tau 2 prime inverse 2}
		\end{eqnarray}
		
		Consider the map $\theta:\widetilde{H}\to I\times I$ given by
		$$\theta(h,g)= \big(1,\chi(g)^{-1} \big)$$
		for all $h, g \in H$. In view of \eqref{boundary con sb1} and \eqref{boundary con sb2}, to prove the claim, it suffices to show that
		\begin{align*}
			\tau^\kappa & \big((h_1,g_1),(h_2,g_2)\big) \, \big(\tau^\varphi((h_1,g_1),(h_2,g_2))\big)^{-1} \\
			&= \theta \big((h_1,g_1)\bullet(h_2,g_2)\big)^{-1} \widetilde{\zeta}_{(h_2,g_2)} \big(\theta(h_1,g_1)\big)\,\theta(h_2,g_2)
		\end{align*}
		and
		\begin{align*}
			\beta^{r^\kappa}&\big((h_1,g_1),(h_2,g_2) \big)\,  \big(\tau'^\varphi((h_1,g_1),(h_2,g_2)) \big)^{-1} \\
			={ }&{ }\theta \big((h_1,g_1)\odot(h_2,g_2)\big)^{-1} ~ \widetilde{\xi}_{(h_1,g_1)\odot(h_2,g_2)} \big(\widetilde{\epsilon}_{(h_2,g_2)}(\widetilde{\xi}^{-1}_{(h_1,g_1)}(\theta(h_1,g_1)))\big) \widetilde{\xi}_{(h_1,g_1)}\big(\theta(h_2,g_2)\big)
		\end{align*}
		for all $h_1,g_1,h_2,g_2 \in H$.  First, note that
		$$	\theta \big((h_1,g_1)\bullet(h_2,g_2) \big)^{-1}= \theta \big(h_1\lambda^H_{g_1}(h_2),\, g_1\circ g_2 \big)^{-1}= \big(1, \,\chi(g_1\circ g_2)^{-1}\big)^{-1} \nonumber\\
		= \big(1, \,\chi(g_1\circ g_2)\big)\nonumber
		$$
		and
		\begin{eqnarray*}
			\widetilde{\zeta}_{(h_2,g_2)} \big(\theta(h_1,g_1)\big)&=& \widetilde{\zeta}_{(h_2,g_2)} \big(1,\chi(g_1)^{-1}\big)\\
			&=&\big(\xi^{-1}_{g_2}(\zeta_{h_2}(\chi(g_1))\xi_{h_2}(\epsilon_{h_2}(\chi(g_1)^{-1}))), \, \epsilon_{g_2}(\chi(g_1)^{-1})\big), \text{ using \eqref{tilde zeta}.}
		\end{eqnarray*}
		By  \cite[Corollary 4.2 and Proposition 4.3]{MR4819014}, $\nu_h=\xi_h$ and $f(k,h)=\zeta_h(k^{-1})\xi_h(\epsilon_h(k))$, for all $h\in H$ and $k\in I$. Therefore,
		\begin{eqnarray*}
			\widetilde{\zeta}_{(h_2,g_2)} \big(\theta(h_1,g_1)\big)&=& \big(\nu_{g_2^\dagger}(f(\chi(g_1)^{-1},h_2)), \, \epsilon_{g_2}(\chi(g_1)^{-1}) \big)\\
			&=&\big(\big(\nu_{g_2^\dagger}(f(\chi(g_1),h_2))\big)^{-1}, \, \epsilon_{g_2}(\chi(g_1)^{-1})\big), ~\text{using Lemma \eqref{properties of f}(2).}
		\end{eqnarray*}
		
		Thus, we have
		\begin{eqnarray*}
			&& \theta \big((h_1,g_1)\bullet(h_2,g_2)\big)^{-1} ~  \widetilde{\zeta}_{(h_2,g_2)} \big(\theta(h_1,g_1)\big)\,\theta(h_2,g_2)\\
			&=& \big(1, \,\chi(g_1\circ g_2)\big)\big(\big(\nu_{g_2^\dagger}(f(\chi(g_1),h_2))\big)^{-1}, \, \epsilon_{g_2}(\chi(g_1)^{-1})\big) \big(1, \, \chi(g_2)^{-1}\big)\\
			&=& \big(\big(\nu_{g_2^\dagger}(f(\chi(g_1),h_2))\big)^{-1}, \,\chi(g_1\circ g_2) \epsilon_{g_2}(\chi(g_1)^{-1}) \chi(g_2)^{-1}\big)\\
			&=& \big(\big(\nu_{g_2^\dagger}(f(\chi(g_1),h_2))\big)^{-1}, \,\chi(g_1\circ g_2) \sigma_{g_2}(\chi(g_1)^{-1}) \chi(g_2)^{-1}\big), ~\textrm{since $\sigma_h=\epsilon_h$ for all $h\in H$}\\
			&=& \tau^\kappa \big((h_1,g_1),(h_2,g_2)\big) \, \big(\tau^\varphi((h_1,g_1),(h_2,g_2))\big)^{-1}, ~\textrm{by \eqref{tau 1 prime tau 2 prime inverse 1}}.
		\end{eqnarray*}
		
		Next, note that
		\begin{equation}\label{tilde f first term}
			\theta \big((h_1,g_1)\odot(h_2,g_2)\big)^{-1}=\theta(h_1\circ h_2,h_1\circ g_2\circ h_1^\dagger \circ g_1)^{-1}= \big(1, ~ \chi(h_1\circ g_2\circ h_1^\dagger \circ g_1)\big)
		\end{equation}
		and
		$$\widetilde{\xi}^{-1}_{(h_1,g_1)} \big(\theta(h_1,g_1)\big)= \widetilde{\xi}_{(h_1^\dagger,h_1^\dagger \circ g_1^\dagger\circ h_1)} \big(1,\, \chi(g_1)^{-1}\big)=
		\big(1,\,\epsilon_{g_1^\dagger\circ h_1}(\chi(g_1)^{-1}) \big),~~\text{using \eqref{tilde xi}.}
		$$
		This gives
		\begin{eqnarray}
			&& \widetilde{\xi}_{(h_1,g_1)\odot(h_2,g_2)}(\widetilde{\epsilon}_{(h_2,g_2)}(\widetilde{\xi}^{-1}_{(h_1,g_1)}(\theta(h_1,g_1)))) \nonumber\\
			&=&\widetilde{\xi}_{(h_1\circ h_2,h_1\circ g_2\circ h_1^\dagger \circ g_1)}(\widetilde{\epsilon}_{(h_2,g_2)}(1,\,\epsilon_{g_1^\dagger\circ h_1}(\chi(g_1)^{-1})))\nonumber\\
			&=&\big(1, ~ \epsilon_{h_2^\dagger\circ g_2\circ h_1^\dagger\circ g_1}(\epsilon_{h_2}(\epsilon^{-1}_{g_2}( \epsilon_{g_1^\dagger\circ h_1}(\chi(g_1)^{-1}))))\big),~\text{using \eqref{tilde xi} and \eqref{tilde epsilon}.}\nonumber\\
			&=& \big(1, ~\chi(g_1)^{-1}\big),~\textrm{since $\epsilon : H^{(\circ)} \rightarrow \Aut(I)$ is an anti-homomorphism}.\label{tilde f second term}
		\end{eqnarray}
		Further, we have
		\begin{equation}\label{tilde f third term}
			\widetilde{\xi}_{(h_1,g_1)} \big(\theta(h_2,g_2)\big) =\widetilde{\xi}_{(h_1,g_1)} \big(1,\, \chi(g_2)^{-1} \big)=\big(1, ~ \epsilon_{h_1^\dagger\circ g_1}(\chi(g_2)^{-1})\big),\text{ using \eqref{tilde xi}.}
		\end{equation}
		Using \eqref{tilde f first term}, \eqref{tilde f second term} and \eqref{tilde f third term}, we obtain
		\begin{eqnarray*}
			&& \theta \big((h_1,g_1)\odot(h_2,g_2)\big)^{-1} ~ \widetilde{\xi}_{(h_1,g_1)\odot(h_2,g_2)} \big(\widetilde{\epsilon}_{(h_2,g_2)}(\widetilde{\xi}^{-1}_{(h_1,g_1)}(\theta(h_1,g_1)))\big) ~ \widetilde{\xi}_{(h_1,g_1)}\big(\theta(h_2,g_2)\big)\\
			&=& \big(1, ~ \chi(h_1\circ g_2\circ h_1^\dagger \circ g_1)\big) \big(1, ~\chi(g_1)^{-1}\big) \big(1, ~ \epsilon_{h_1^\dagger\circ g_1}(\chi(g_2)^{-1})\big)\\
			&=& \big(1, ~ \chi(h_1\circ g_2\circ h_1^\dagger \circ g_1) \chi(g_1)^{-1} \epsilon_{h_1^\dagger\circ g_1}(\chi(g_2)^{-1})\big)\\
			&=& \big(1, ~ \chi(h_1\circ g_2\circ h_1^\dagger \circ g_1) \chi(g_1)^{-1} \sigma_{h_1^\dagger\circ g_1}(\chi(g_2)^{-1})\big), ~\textrm{since $\sigma_h=\epsilon_h$ for all $h\in H$}\\
			&=& \beta^{r^\kappa} \big((h_1,g_1),(h_2,g_2) \big)\,  \big(\tau'^\varphi((h_1,g_1),(h_2,g_2)) \big)^{-1}, ~\textrm{by \eqref{tau 1 prime tau 2 prime inverse 2}}.
		\end{eqnarray*}
		This proves our claim, and hence the proof of the theorem is complete.
	\end{proof}

	\section{Isoclinism of squares of skew left braces}\label{section isoclinism of squares}
	Recall from~\cite[Definition 6, 7]{MR3917122} that the \textit{annihilator} $\Ann(H)$ of a~skew left brace $(H, \cdot, \circ)$ is defined as
	\[
	\Ann(H)= \ker (\lambda^H) \cap \Z(H^{(\cdot)}) \cap \Fix (\lambda^H)=\{ a \in H \mid b \circ a = a \circ b = b \cdot a = a \cdot b~ \textrm{for all}~ b \in H \}.
	\]
	Similarly, the \textit{commutator} $H'$ of a~skew left brace $(H, \cdot, \circ)$ is defined as the subgroup of $H^{(\cdot)}$ generated by the commutator subgroup of $H^{(\cdot)}$ and the set (see~\cite[Definition~2.1]{MR4698318})
	\[
	\{a^{-1} \cdot ( a \circ b ) \cdot b^{-1} \mid a, b \in H \}.
	\]
	It turns out that both the annihilator $\Ann(H)$ and the commutator $H'$ are ideals of $(H, \cdot, \circ)$ (see~\cite[p.4]{MR3917122} and~\cite[p.2892]{MR4698318}, respectively). We now recall the definition of isoclinism of skew left braces from~\cite[Definition 2.7]{MR4698318}.
	
	\begin{definition}
		Two skew left braces $A$ and $B$ are said to be isoclinic if there exist skew brace isomorphisms $\xi_1: A/ \Ann(A) \to B/ \Ann(B)$ and $\xi_2: A' \to B'$ such that the diagram
		\begin{align}
			\begin{CD}
				A' @<{\theta_A}<< \big(A/ \Ann(A)\big) \times \big(A/ \Ann(A)\big)@>{\theta^{*}_A}>> A' \\
				@V{\xi_2}VV @V{\xi_1 \times \xi_1}VV @V{\xi_2}VV\\
				B' @<{\theta_B}<< \big(B/ \Ann(B)\big) \times \big(B/ \Ann(B)\big)@>{\theta^{*}_B}>> B'
			\end{CD}
		\end{align}
		commutes. Here, the maps $\theta_A, \theta^*_A: (A/ \Ann(A)) \times (A/ \Ann(A)) \rightarrow A' $ are defined by
		\[
		\theta_A(\overline{a}, \overline{b})=a\cdot b\cdot a^{-1}\cdot b^{-1} \text{ and }  \theta^*_A(\overline{a}, \overline{b})=\lambda^H_a(b)\cdot b^{-1}
		\]
		for all $a,b \in A$. The maps $\theta_B$ and $\theta^*_B$ are defined similarly.
	\end{definition}
	
	The following results from~\cite[Proposition 3.4 and Theorem 3.7]{RSU24} will be used in the sequel.
	
	\begin{proposition}\label{center and commutator of tilde}
		Let $(H, \cdot, \circ)$ be a~skew left brace and $(\widetilde{H},\bullet,\odot)$ be its square. Then the following assertions hold:
		\begin{enumerate}
			\item $\Z(\widetilde{H}^{(\bullet)}) \trianglelefteq \Fix(\lambda^H ) \times \Z(H^{(\circ)}) $.
			\item $(\widetilde{H}^{(\bullet)})'=H' \rtimes_{\lambda^H} (H^{(\circ)})'$.
		\end{enumerate}
		
	\end{proposition}
	
	The following observations are direct.
	
	\begin{lemma}
		Let $(H, \cdot, \circ)$ be a~skew left brace and $(\widetilde{H},\bullet,\odot)$ be its square. Then the following assertions hold:
		\begin{enumerate}
			\item $\Ann(H)\times \Ann(H)\subseteq \Ann(\widetilde{H})$.
			\item $\Ann(H)\times \Ann(H)$ is an ideal of $(\widetilde{H},\bullet,\odot)$.
		\end{enumerate}
		
	\end{lemma}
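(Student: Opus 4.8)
The plan is to work throughout with the explicit characterisation
$\Ann(H)=\{a\in H\mid b\circ a=a\circ b=b\cdot a=a\cdot b \text{ for all } b\in H\}$,
which simultaneously encodes that $a$ is central in $(H,\circ)$, central in $H^{(\cdot)}$, fixed by every $\lambda^H_b$, that $\lambda^H_a=\Id$, and moreover that $\circ$ and $\cdot$ agree whenever one argument lies in $\Ann(H)$. For assertion (1), I would take $a,c\in\Ann(H)$ and verify directly that $(a,c)$ satisfies the four-fold equality defining $\Ann(\widetilde{H})$, namely
$(h,g)\odot(a,c)=(a,c)\odot(h,g)=(h,g)\bullet(a,c)=(a,c)\bullet(h,g)$
for all $(h,g)\in\widetilde{H}$. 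Expanding the two $\bullet$-products via the semi-direct multiplication and using $a\in\Fix(\lambda^H)$, $c\in\ker(\lambda^H)$ together with the centrality of $a,c$ in $H^{(\cdot)}$ should collapse both of them to $(h\cdot a,\,g\cdot c)$; expanding the two $\odot$-products and using that $a,c$ are central in $(H,\circ)$ (so that $h\circ c\circ h^\dagger=c$ and $a\circ g\circ a^\dagger=g$) should collapse both of them to $(h\circ a,\,c\circ g)$. The identities $h\circ a=h\cdot a$ and $c\circ g=g\cdot c$ coming from the annihilator characterisation then identify the two collapsed forms, yielding $(a,c)\in\Ann(\widetilde{H})$ and hence (1).

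For assertion (2), the strategy is to exploit (1). Writing $N:=\Ann(H)\times\Ann(H)$, the containment $N\subseteq\Ann(\widetilde{H})$ from (1), together with the fact that $\Ann(\widetilde{H})\subseteq\Z(\widetilde{H}^{(\bullet)})\cap\Z(\widetilde{H}^{(\odot)})\cap\Fix(\lambda^{\widetilde{H}})$ (a direct consequence of the annihilator characterisation applied to $\widetilde{H}$), shows that every element of $N$ is central in both group structures of $\widetilde{H}$ and is fixed by every $\lambda^{\widetilde{H}}_{(h,g)}$. Consequently, normality of $N$ in $(\widetilde{H},\bullet)$ and in $(\widetilde{H},\odot)$, as well as the $\lambda^{\widetilde{H}}$-invariance demanded of an ideal, are all automatic once $N$ is known to be a subgroup under each operation. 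Thus the entire content of (2) reduces to checking that $N$ is a subgroup of $(\widetilde{H},\bullet)$ and of $(\widetilde{H},\odot)$.

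For the additive structure $(\widetilde{H},\bullet)=H^{(\cdot)}\rtimes_{\lambda^H}H^{(\circ)}$, since $\Ann(H)$ is an ideal of $H$ it is a subgroup of both $H^{(\cdot)}$ and $H^{(\circ)}$, and the semi-direct-product subgroup criterion is met because $\lambda^H_c(a)=a$ for every $a\in\Ann(H)\subseteq\Fix(\lambda^H)$; hence $N$ is closed under $\bullet$ and under $\bullet$-inverses. The one genuinely computational point is closure under $\odot$: for $(a_1,c_1),(a_2,c_2)\in N$ one has $(a_1,c_1)\odot(a_2,c_2)=\big(a_1\circ a_2,\,a_1\circ c_2\circ a_1^\dagger\circ c_1\big)$, and I would use the centrality of $c_2\in\Ann(H)$ in $(H,\circ)$ to rewrite $a_1\circ c_2\circ a_1^\dagger=c_2$, so that the second coordinate becomes $c_2\circ c_1\in\Ann(H)$ while the first is $a_1\circ a_2\in\Ann(H)$; a parallel computation identifies the $\odot$-inverse of $(a,c)$ as $(a^\dagger,c^\dagger)\in N$. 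I expect this $\odot$-closure step — ensuring that the twisted second coordinate lands back in $\Ann(H)$ — to be the main (and only mildly subtle) obstacle, since it is the sole place where the nontrivial interaction between the two operations of the square must be controlled. With both subgroup checks in hand, assertion (2) follows.
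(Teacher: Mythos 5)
Your proof is correct and fills in exactly the kind of direct verification the paper has in mind — the paper states this lemma without proof, remarking only that the observations are "direct." Both the coordinate-wise check of the four-fold annihilator identity for part (1) and the reduction of part (2) to closure under $\bullet$ and $\odot$ (with the twisted second coordinate of the $\odot$-product collapsing via centrality of $c_2$ in $(H,\circ)$) are sound.
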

	
	\begin{lemma}\label{annihilator lemma}
		Let $(H,\cdot,\circ)$ be a~skew left brace and $(\widetilde{H},\bullet,\odot)$ be its square. Then the following assertions hold:
		\begin{enumerate}
			\item $\Ann(\widetilde{H}) \leq \Fix(\lambda^H ) \times \Z(H^{(\circ)})$.
			\item $\widetilde{H}/ \big(\Ann(H) \times \Ann(H) \big) \cong \big(\widetilde{H/ \Ann(H)}\big)$.
		\end{enumerate}
		
	\end{lemma}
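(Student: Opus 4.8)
The plan is to treat the two assertions separately; the first follows almost formally from the results already in hand, while the second is an application of the functoriality established in Proposition \ref{sb iso to isb iso} together with the first isomorphism theorem. For part (1), I would start from the definition of the annihilator of the square, namely
$$\Ann(\widetilde{H}) = \ker(\lambda^{\widetilde{H}}) \cap \Z(\widetilde{H}^{(\bullet)}) \cap \Fix(\lambda^{\widetilde{H}}),$$
which gives at once the inclusion $\Ann(\widetilde{H}) \subseteq \Z(\widetilde{H}^{(\bullet)})$. Proposition \ref{center and commutator of tilde}(1) asserts that $\Z(\widetilde{H}^{(\bullet)}) \trianglelefteq \Fix(\lambda^H) \times \Z(H^{(\circ)})$, and in particular $\Z(\widetilde{H}^{(\bullet)}) \leq \Fix(\lambda^H) \times \Z(H^{(\circ)})$ as a subgroup. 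Chaining the two inclusions yields $\Ann(\widetilde{H}) \leq \Fix(\lambda^H) \times \Z(H^{(\circ)})$, as required.

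For part (2), I would set $N = \Ann(H)$ and consider the canonical quotient homomorphism of skew left braces $q \colon H \to H/N$, which is legitimate since $\Ann(H)$ is an ideal of $(H,\cdot,\circ)$. Applying the functor $\mathcal{O}$ of Proposition \ref{sb iso to isb iso} produces a homomorphism of skew left braces $\widetilde{q} = \mathcal{O}(q) \colon \widetilde{H} \to \widetilde{H/N}$ given by $\widetilde{q}(h,g) = (q(h), q(g))$. Since $q$ is surjective, so is $\widetilde{q}$, so $\im(\widetilde{q}) = \widetilde{H/N}$. The core of the argument is the identification of the kernel: because $N$ is an ideal, its additive and multiplicative cosets coincide, so $q(h)$ is trivial precisely when $h \in N$; hence $\widetilde{q}(h,g)$ is the identity of $\widetilde{H/N}$ exactly when $h, g \in N$, giving $\ker(\widetilde{q}) = N \times N = \Ann(H) \times \Ann(H)$. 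The first isomorphism theorem for skew left braces then delivers $\widetilde{H}/\big(\Ann(H) \times \Ann(H)\big) \cong \widetilde{H/N} = \big(\widetilde{H/\Ann(H)}\big)$.

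The steps are largely formal, and I expect the only point requiring genuine care to be confirming that $\ker(\widetilde{q})$ is exactly $\Ann(H) \times \Ann(H)$ and not a larger ideal; this is precisely where the ideal property of $\Ann(H)$ is needed, to guarantee that the single quotient $H/N$ carries a well-defined skew left brace structure with matching $\cdot$- and $\circ$-cosets. I would also invoke Lemma \ref{annihilator property}(2) at the outset to know that $\Ann(H) \times \Ann(H)$ is an ideal of $(\widetilde{H},\bullet,\odot)$, so that the quotient $\widetilde{H}/\big(\Ann(H) \times \Ann(H)\big)$ appearing on the left-hand side is a bona fide skew left brace before the isomorphism is asserted.
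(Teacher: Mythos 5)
Your proposal is correct and follows essentially the same route as the paper: part (1) is proved identically by combining the definition of $\Ann(\widetilde{H})$ with Proposition \ref{center and commutator of tilde}(1), and in part (2) your map $\overline{(a,b)} \mapsto (\overline{a},\overline{b})$ is exactly the isomorphism the paper writes down (citing a remark of \cite{RSU24}), which you simply justify in more detail via the functor $\mathcal{O}$ applied to the quotient map and the first isomorphism theorem. No gaps.
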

	
	\begin{proof}
		By definition, we have $\Ann(\widetilde{H})=\Ker(\lambda^{\widetilde{H}})\cap \Z(\widetilde{H}^{(\bullet)})\cap \Fix(\lambda^{\widetilde{H}})$. By Proposition~\ref{center and commutator of tilde}(1), we have $\Z(\widetilde{H}^{(\bullet)}) \trianglelefteq \Fix(\lambda^H ) \times \Z(H^{(\circ)})$, and assertion (1) follows.
		
		Let $\overline{\lambda}$ be the induced action for $H/\Ann(H)$. It is easy to see that the map
		\[
		\phi: \widetilde{H}/ \big(\Ann(H) \times \Ann(H)\big) \to \big(\widetilde{H/ \Ann(H)}\big),
		\]
		given by $\phi(\overline{(a,b)}) = (\overline{a}, \overline{b})$ for all $a, b \in H$, is an isomorphism of skew left braces~\cite[Remark~3.6]{RSU24}.
	\end{proof}
	
	\begin{proposition}
		Let $(A,\cdot,\circ)$ and $(B,\cdot,\circ)$ be isoclinic skew left braces via an isoclinism $(\xi_1, \xi_2 )$. Then the following assertions hold:
		\begin{enumerate}
			\item There exists a~skew brace isomorphism
			\[
			\widetilde{\xi_1}: \widetilde{A}/\big(\Ann(A)\times\Ann(A)\big)\to \widetilde{B}/\big(\Ann(B)\times\Ann(B)\big),
			\]
			defined as $\widetilde{\xi_1} \big(\overline{a}_1, \overline{a}_2 \big)= \big(\xi_1 (\overline{a}_1 ),\xi_1 (\overline{a}_2 ) \big)$.
			\item There exists a~skew brace isomorphism
			\[
			\widetilde{\xi_2}:(\widetilde{A})' \to (\widetilde{B})',
			\]
			defined as $\widetilde{\xi_2}(a_1, a_2)=(\xi_2 (a_1), \xi_2 (a_2))$.
		\end{enumerate}
		
	\end{proposition}
	
	\begin{proof}
		In view of Lemma~\ref{annihilator lemma}(2), we identify
		$\widetilde{A}/ (\Ann(A) \times \Ann(A))$ with $\big(\widetilde{A/ \Ann(A)}\big) $, and similarly for $B$. It now follows that the map $\widetilde{\xi_1}(\overline{a}_1,\overline{a}_2 )=(\xi_1 (\overline{a}_1 ),\xi_1 (\overline{a}_2 ))$ is an isomorphism.
		
		By Proposition~\ref{center and commutator of tilde}(2), we have $(\widetilde{A}^{(\bullet)})'=A' \rtimes_{\lambda^A} (A^{(\circ)})'$ and $(\widetilde{B}^{(\bullet)})'=B' \rtimes_{\lambda^B} (B^{(\circ)})'$. By~\cite[Theorem 3.13]{RSU24}, the isomorphism $\xi_2:A' \to B'$ induces an isomorphism of groups $\widetilde{\xi_2}:(\widetilde{A}^{(\bullet)})' \to (\widetilde{B}^{(\bullet)})'$, given by $\widetilde{\xi_2}(a_1,a_2 )=(\xi_2 (a_1 ),\xi_2 (a_2 ))$. A direct check shows that $\widetilde{\xi_2}$ restricts to an isomorphism $(\widetilde{A})' \to (\widetilde{B})'$.
	\end{proof}
	
	\begin{theorem}\label{theorem isoclinicsdp}
		Let $(A,\cdot,\circ)$ and $(B,\cdot,\circ)$ be isoclinic skew left braces. If
		\[
		\Ann(\widetilde{A}) = \Ann(A) \times \Ann(A)
		\quad \text{and} \quad
		\Ann(\widetilde{B}) = \Ann(B) \times \Ann(B),
		\]
		then $(\widetilde{A},\bullet,\odot)$ and $(\widetilde{B},\bullet,\odot)$ are also isoclinic.
	\end{theorem}
	
	\begin{proof}
		Let $(\xi_1, \xi_2 )$ be an isoclinism from $(A,\cdot,\circ)$ onto $(B,\cdot,\circ)$. We claim that the following diagram commutes
		\begin{align}
			\begin{CD}
				(\widetilde{A})' @<{\theta_{\widetilde{A}}}<<\big(\widetilde{A}/ \Ann(\widetilde{A})\big) \times \big(\widetilde{A}/ \Ann(\widetilde{A})\big)@>{\theta^{*}_{\widetilde{A}}}>> (\widetilde{A})' \\
				@V{\widetilde{\xi_2}}VV @V{\widetilde{\xi_1} \times \widetilde{\xi_1}}VV @V{\widetilde{\xi_2}}VV\\
				(\widetilde{B})' @<{\theta_{\widetilde{B}}}<< \big(\widetilde{B}/ \Ann(\widetilde{B})\big) \times \big(\widetilde{B}/ \Ann(\widetilde{B})\big)@>{\theta^{*}_{\widetilde{B}}}>> (\widetilde{B})',
			\end{CD}
		\end{align}
		where the maps $\theta_{\widetilde{A}}, \theta^*_{\widetilde{A}}: \big(\widetilde{A}/\Ann(\widetilde{A})\big) \times \big(\widetilde{A}/ \Ann(\widetilde{A})\big) \rightarrow (\widetilde{A})' $ are defined by
		\[
		\theta_{\widetilde{A}} \big((\overline{a}_1,\overline{b}_1), (\overline{a}_2,\overline{b}_2) \big)=(a_1,b_1) \bullet (a_2,b_2) \bullet (a_1,b_1)^{-1}\bullet (a_2,b_2)^{-1}
		\]
		and
		\[
		\theta^*_{\widetilde{A}} \big((\overline{a}_1,\overline{b}_1), (\overline{a}_2,\overline{b}_2) \big)=(1,b_1^\dagger\circ a_1)\bullet (a_2,b_2)\bullet (1,b_1^\dagger\circ a_1)^{-1} \bullet (a_2,b_2)^{-1}
		\]
		for all $a_1,b_1,a_2,b_2 \in A$. The maps $\theta_{\widetilde{B}}$ and $\theta^*_{\widetilde{B}}$ are defined similarly.
		
		First, we prove the commutativity of the left hand side of the diagram. To avoid complexity of the notation, we will denote the composition $A\to A/\Ann(A)\to B/\Ann(B)$, where $A\to A/\Ann(A)$ is the canonical map, by $\xi_1$. We see that
		\begin{eqnarray}
			\theta_{\widetilde{A}} \big((\overline{a}_1,\overline{b}_1 ), (\overline{a}_2,\overline{b}_2 ) \big)&=&(a_1,b_1 ) \bullet (a_2,b_2 ) \bullet (a_1,b_1 )^{-1}\bullet (a_2,b_2 )^{-1}\nonumber\\
			&=& \big(a_1\lambda^A_{b_1}(a_2 ), \,b_1\circ b_2 \big)\bullet \big(\lambda^A_{b_1^\dagger}(a_1 )^{-1}, \,b_1^\dagger \big)\bullet \big(\lambda^A_{b_2^\dagger}(a_2 )^{-1}, \,b_2^\dagger \big)\nonumber\\
			&=& \big(a_1\lambda^A_{b_1}(a_2 ), \,b_1\circ b_2 \big)\bullet \big(\lambda^A_{b_1^\dagger}(a_1 )^{-1}\lambda^A_{b_1}(\lambda^A_{b_2^\dagger}(a_2 )^{-1}), \,b_1^\dagger\circ b_2^\dagger \big)\nonumber\\
			&=&\big(a_1\lambda^A_{b_1}(a_2 )\lambda^A_{b_1\circ b_2\circ b_1^\dagger\circ b_2^\dagger}(\lambda_{b_2}(a_1 )^{-1} a_2^{-1}), \,b_1\circ b_2\circ b_1^\dagger\circ b_2^\dagger\big)\label{theta left}.
		\end{eqnarray}
		Thus, we have
		\begin{eqnarray}
			&& \widetilde{\xi_2} \, \theta_{\widetilde{A}} \big((\overline{a}_1,\overline{b}_1 ), \, (\overline{a}_2,\overline{b}_2 ) \big)\nonumber\\
			&=&\widetilde{\xi_2}\big(a_1\lambda^A_{b_1}(a_2 )\lambda^A_{b_1\circ b_2\circ b_1^\dagger\circ b_2^\dagger}(\lambda_{b_2}(a_1 )^{-1} a_2^{-1}), b_1\circ b_2\circ b_1^\dagger\circ b_2^\dagger\big),\quad \text{using~\eqref{theta left}}\nonumber\\
			&=&\big(\xi_2 (a_1\lambda^A_{b_1}(a_2 )\lambda^A_{b_1\circ b_2\circ b_1^\dagger\circ b_2^\dagger}(\lambda_{b_2}(a_1 )^{-1} a_2^{-1})),~\xi_2 (b_1\circ b_2\circ b_1^\dagger\circ b_2^\dagger)\big)\label{theta left 1}.
		\end{eqnarray}
		The isoclinism $(\xi_1, \xi_2)$ from $(A, \cdot, \circ)$ onto $(B, \cdot, \circ)$ gives
		\begin{eqnarray}
			\xi_2 (a\cdot b\cdot a^{-1}\cdot b^{-1}) &= &\xi_1 (a)\cdot \xi_1 (b)\cdot \xi_1 (a)^{-1}\cdot\xi_1 (b)^{-1} \label{sbisoclinism xi1}~\textrm{and}\\
			\lambda^B_{\xi_1 (a)} \big(\xi_1 (b)\big) \cdot \big(\xi_1 (b) \big)^{-1} &=& \xi_2 \big(\lambda^A_a (b) \cdot b^{-1} \big)\label{sbisoclinism xi2}
		\end{eqnarray}
		for all $a, b \in A$. We compute the first coordinate of~\eqref{theta left 1} and obtain
		\begin{align*}
			\xi_2 & (a_1\lambda^A_{b_1}(a_2 )\lambda^A_{b_1\circ b_2\circ b_1^\dagger\circ b_2^\dagger}(\lambda_{b_2}(a_1 )^{-1} a_2^{-1}))\\
			&=\xi_2 (a_1\lambda^A_{b_1}(a_2 )(\lambda^A_{b_1\circ b_2\circ b_1^\dagger\circ b_2^\dagger}(a_2\lambda^A_{b_2}(a_1 ) ))^{-1})\\
			&=\xi_2 (a_1\lambda^A_{b_1}(a_2 )(a_2\lambda^A_{b_2}(a_1 ))^{-1} a_2\lambda^A_{b_2}(a_1 ) (\lambda^A_{b_1\circ b_2\circ b_1^\dagger\circ b_2^\dagger}(a_2\lambda^A_{b_2}(a_1 ) ))^{-1})\\
			&=\xi_2\big(a_1\lambda^A_{b_1}(a_2 )(a_2\lambda^A_{b_2}(a_1 ))^{-1} \big)~\xi_2\big(a_2\lambda^A_{b_2}(a_1 )(\lambda^A_{b_1\circ b_2\circ b_1^\dagger\circ b_2^\dagger}(a_2\lambda^A_{b_2}(a_1 ) ))^{-1}\big)\\
			&=\xi_2\big(a_1\lambda^A_{b_1}(a_2 ) a_1^{-1} \lambda^A_{b_1}(a_2 )^{-1} \lambda^A_{b_1}(a_2 ) a_1\lambda^A_{b_2^\dagger}(a_2\lambda^A_{b_2}(a_1 ))^{-1} \lambda^A_{b_2^\dagger}(a_2\lambda^A_{b_2}(a_1 )) (a_2\lambda^A_{b_2}(a_1 ))^{-1} \big)\\
			&\qquad\xi_1 (a_2 )\lambda^B_{\xi_1 (b_2 )}(\xi_1 (a_1 ))(\lambda^B_{\xi_1 (b_1\circ b_2\circ b_1^\dagger\circ b_2^\dagger)}(\xi_1 (a_2 )\lambda^B_{\xi_1 (b_2 )}(\xi_1 (a_1 )) ))^{-1},\text{ using \eqref{sbisoclinism xi2}}\\
			&=\xi_2\big(a_1\lambda^A_{b_1}(a_2 ) a_1^{-1} \lambda^A_{b_1}(a_2 )^{-1}\big) \, \xi_2\big(\lambda^A_{b_1}(a_2 ) a_1\lambda^A_{b_2^\dagger}(a_2\lambda^A_{b_2}(a_1 ))^{-1}\big) \\
			&\qquad \xi_2\big(\lambda^A_{b_2^\dagger}(a_2\lambda^A_{b_2}(a_1 )) (a_2\lambda^A_{b_2}(a_1 ))^{-1} \big) \, \xi_1 (a_2 ) \, \lambda^B_{\xi_1 (b_2 )}(\xi_1 (a_1 )) \\
			&\qquad (\lambda^B_{\xi_1 (b_1 )\circ\xi_1 ( b_2 )\circ \xi_1 (b_1 )^\dagger\circ \xi_1 (b_2 )^\dagger}(\xi_1 (a_2 )\lambda^B_{\xi_1 (b_2 )}(\xi_1 (a_1 )) ))^{-1}, \text{ by \cite[Proposition 3.9]{MR4698318}}\\
			&=\xi_1 (a_1 )\lambda^B_{\xi_1 (b_1 )}(\xi_1 (a_2 )) \xi_1 (a_1 )^{-1} \lambda^B_{\xi_1 (b_1 )}(\xi_1 (a_2 ))^{-1} \\
			&\qquad\xi_2\big(\lambda^A_{b_1}(a_2 ) a_2^{-1} a_2 \lambda^A_{b_2^\dagger}(a_2 )^{-1}\big) \lambda^B_{\xi_1 (b_2 )^\dagger}(\xi_1 (a_2 )\lambda^B_{\xi_1 (b_2 )}(\xi_1 (a_1 ))) (\xi_1 (a_2 )\lambda^B_{\xi_1 (b_2 )}(\xi_1 (a_1 )))^{-1} \\
			&\qquad\xi_1 (a_2 )\lambda^B_{\xi_1 (b_2 )}(\xi_1 (a_1 ))(\lambda^B_{\xi_1 (b_1 )\circ\xi_1 ( b_2 )\circ \xi_1 (b_1 )^\dagger\circ \xi_1 (b_2 )^\dagger}(\xi_1 (a_2 )\lambda^B_{\xi_1 (b_2 )}(\xi_1 (a_1 )) ))^{-1}, \\
			&\qquad \text{using \eqref{sbisoclinism xi1} and \eqref{sbisoclinism xi2}} \\
			&=\xi_1 (a_1 )\lambda^B_{\xi_1 (b_1 )}(\xi_1 (a_2 )) \xi_1 (a_1 )^{-1} \lambda^B_{\xi_1 (b_1 )}(\xi_1 (a_2 ))^{-1} \\
			&\qquad\lambda^B_{\xi_1 (b_1 )}(\xi_1 (a_2 )) \, \xi_1 (a_2 )^{-1} \, \xi_1 (a_2 ) \, \lambda^B_{\xi_1 (b_2 )^\dagger}(\xi_1 (a_2 ))^{-1} \, \lambda^B_{\xi_1 (b_2 )^\dagger}(\xi_1 (a_2 )\lambda^B_{\xi_1 (b_2 )}(\xi_1 (a_1 ))) \\
			&\qquad (\xi_1 (a_2) \lambda^B_{\xi_1 (b_2 )}(\xi_1 (a_1 )))^{-1} \, \xi_1 (a_2) \, \lambda^B_{\xi_1 (b_2 )}(\xi_1 (a_1 )) \\
			&\qquad (\lambda^B_{\xi_1 (b_1 )\circ\xi_1 ( b_2 )\circ \xi_1 (b_1 )^\dagger\circ \xi_1 (b_2 )^\dagger}(\xi_1 (a_2 )\lambda^B_{\xi_1 (b_2 )}(\xi_1 (a_1 )) ))^{-1}, \text{using \eqref{sbisoclinism xi2}} \\
			&=\xi_1 (a_1 )\lambda^B_{\xi_1 (b_1 )}(\xi_1 (a_2 ))\lambda^B_{\xi_1 (b_1 )\circ\xi_1 ( b_2 )\circ \xi_1 (b_1 )^\dagger\circ \xi_1 (b_2 )^\dagger}(\lambda^B_{\xi_1 (b_2 )}(\xi_1 (a_1 ))^{-1}\xi_1 (a_2 )^{-1} ).
		\end{align*}
		Using this in~\eqref{theta left 1}, we get
		\begin{eqnarray*}
			&& \widetilde{\xi_2} \, \theta_{\widetilde{A}} \big((\overline{a}_1,\overline{b}_1 ), \, (\overline{a}_2,\overline{b}_2 ) \big)\nonumber\\
			&=&\big(\xi_2 (a_1\lambda^A_{b_1}(a_2 )\lambda^A_{b_1\circ b_2\circ b_1^\dagger\circ b_2^\dagger}(\lambda_{b_2}(a_1 )^{-1} a_2^{-1})),\xi_2 (b_1\circ b_2\circ b_1^\dagger\circ b_2^\dagger)\big)\\
			&=&\big(\xi_1 (a_1 )\lambda^B_{\xi_1 (b_1 )}(\xi_1 (a_2 ))\lambda^B_{\xi_1 (b_1 )\circ\xi_1 ( b_2 )\circ \xi_1 (b_1 )^\dagger\circ \xi_1 (b_2 )^\dagger}(\lambda^B_{\xi_1 (b_2 )}(\xi_1 (a_1 ))^{-1}\xi_1 (a_2 )^{-1} ),\\
			&&\xi_1 (b_1 )\circ \xi_1 (b_2 )\circ \xi_1 (b_1 )^\dagger\circ \xi _1 (b_2 )^\dagger \big),\quad \text{by~\cite[Proposition 3.9]{MR4698318}}\\
			&=&(\xi_1 (a_1 ),\xi_1 (b_1 )) \bullet (\xi_1 (a_2 ),\xi_1 (b_2 )) \bullet (\xi_1 (a_1 ),\xi_1 (b_1 )) ^{-1}\bullet (\xi_1 (a_2 ),\xi_1 (b_2 ))^{-1}\nonumber\\
			&=&\theta_{\widetilde{B}}(\widetilde{\xi_1}\times\widetilde{\xi_1})\big((\overline{a}_1,\overline{b}_1 ),(\overline{a}_1,\overline{b}_1 )\big), \quad \text{using Lemma~\eqref{annihilator lemma}(2)}.
		\end{eqnarray*}
		This shows that the left side of the diagram commutes. For the right hand diagram, observe that
		\begin{align}
			\theta^*_{\widetilde{A}} \big((\overline{a}_1,\overline{b}_1 ), (\overline{a}_2,\overline{b}_2 ) \big)&= \big(1,b_1^\dagger\circ a_1\big) \bullet \big(a_2,b_2\big) \bullet\big(1,b_1^\dagger\circ a_1 \big)^{-1} \bullet \big(a_2,b_2 \big)^{-1}\nonumber\\
			&= \big(\lambda^A_{b_1^\dagger\circ a_1}(a_2 ), b_1^\dagger\circ a_1\circ b_2 \big) \bullet \big(\lambda^A_{a_1^\dagger\circ b_1}(\lambda^A_{b_2^\dagger}(a_2^{-1})), a_1^\dagger\circ b_1\circ b_2^\dagger \big)\nonumber\\
			&= \big(\lambda^A_{b_1^\dagger\circ a_1}(a_2 ) \cdot \lambda^A_{b_1^\dagger\circ a_1\circ b_2}(\lambda^A_{a_1^\dagger\circ b_1\circ b_2^\dagger}(a_2^{-1})), b_1^\dagger\circ a_1\circ b_2\circ a_1^\dagger\circ b_1 \circ b_2^\dagger \big)\nonumber\\
			&= \big(\lambda^A_{b_1^\dagger\circ a_1}(a_2 ) \cdot \lambda^A_{b_1^\dagger\circ a_1\circ b_2\circ a_1^\dagger\circ b_1 \circ b_2^\dagger}(a_2^{-1}), b_1^\dagger\circ a_1\circ b_2\circ a_1^\dagger\circ b_1 \circ b_2^\dagger \big).\label{isoisb 1}
		\end{align}
		Thus, we have
		\begin{align*}
			\widetilde{\xi_2}&\theta^*_{\widetilde{A}} \big((\overline{a}_1,\overline{b}_1 ), \, (\overline{a}_2,\overline{b}_2 ) \big)\\
			&=	\widetilde{\xi_2} \big(\lambda^A_{b_1^\dagger\circ a_1}(a_2 )\cdot \lambda^A_{b_1^\dagger\circ a_1\circ b_2\circ a_1^\dagger\circ b_1 \circ b_2^\dagger}(a_2^{-1}), b_1^\dagger\circ a_1\circ b_2\circ a_1^\dagger\circ b_1 \circ b_2^\dagger \big), \text{ using \eqref{isoisb 1}}\\
			&= \big(\xi_2 \big(\lambda^A_{b_1^\dagger\circ a_1}(a_2 ) \cdot \lambda^A_{b_1^\dagger\circ a_1\circ b_2\circ a_1^\dagger\circ b_1 \circ b_2^\dagger}(a_2^{-1})\big),~\xi_2\big(b_1^\dagger\circ a_1\circ b_2\circ a_1^\dagger\circ b_1 \circ b_2^\dagger \big) \big)\\
			&= \big(\xi_2 \big(\lambda^A_{b_1^\dagger\circ a_1}(a_2 )\cdot a_2^{-1}\cdot a_2 \cdot \lambda^A_{b_1^\dagger\circ a_1\circ b_2\circ a_1^\dagger\circ b_1 \circ b_2^\dagger}(a_2^{-1}) \big), \\
			&\qquad \xi_1 (b_1^\dagger\circ a_1 )\circ\xi_1 ( b_2 )\circ \xi_1 (a_1^\dagger\circ b_1 ) \circ \xi_1 (b_2 )^\dagger \big), \text{ by \cite[Proposition 3.9]{MR4698318}}\\
			&= \big(\xi_2 \big(\lambda^A_{b_1^\dagger\circ a_1}(a_2 )\cdot a_2^{-1} \cdot (\lambda^A_{b_1^\dagger\circ a_1\circ b_2\circ a_1^\dagger\circ b_1 \circ b_2^\dagger}(a_2 )\cdot a_2^{-1})^{-1} \big), \\
			&\qquad\xi_1 (b_1^\dagger\circ a_1 )\circ\xi_1 ( b_2 )\circ \xi_1 (a_1^\dagger\circ b_1 ) \circ \xi_1 (b_2 )^\dagger \big)\\
			&= \big(\lambda^B_{\xi_1 (b_1^\dagger\circ a_1 )}(\xi_1 (a_2 )) \cdot \xi_1 (a_2 )^{-1} \cdot (\lambda^B_{\xi_1 (b_1^\dagger\circ a_1 )\circ \xi_1 (b_2 )\circ \xi_1 (a_1^\dagger\circ b_1 ) \circ \xi_1 (b_2 )^\dagger}(\xi_1 (a_2 )) \cdot \xi_1 (a_2 )^{-1})^{-1},\\
			&\qquad\xi_1 (b_1 )^\dagger\circ \xi_1 (a_1 )\circ\xi_1 ( b_2 )\circ \xi_1 (a_1 )^\dagger\circ\xi_1 ( b_1 ) \circ \xi_1 (b_2 )^\dagger \big), \text{ using \eqref{sbisoclinism xi2}}\\
			&= \big(\lambda^B_{\xi_1 (b_1 )^\dagger\circ \xi_1 (a_1 )}(\xi_1 (a_2 )) \cdot \lambda^B_{\xi_1 (b_1 )^\dagger\circ\xi_1 (a_1 )\circ \xi_1 (b_2 )\circ (\xi_1 (b_1 )^\dagger\circ\xi_1 (a_1 ))^\dagger \circ \xi_1 (b_2 )^\dagger}(\xi_1 (a_2 )^{-1}),~\\
			&\qquad\xi_1 (b_1 )^\dagger\circ \xi_1 (a_1 )\circ\xi_1 ( b_2 )\circ (\xi_1 (b_1 )^\dagger\circ\xi_1 (a_1 ))^\dagger \circ \xi_1 (b_2 )^\dagger \big)\\
			&= \big(1,~\xi_1 (b_1 )^\dagger\circ \xi_1 (a_1 ) \big) \bullet \big(\xi_1 (a_2 ), \, \xi_1 (b_2 )\big) \bullet \big(1,\, \xi_1 (b_1 )^\dagger\circ \xi_1 (a_1 )\big)^{-1} \bullet \big(\xi_1 (a_2 ),\, \xi_1 (b_2 )\big)^{-1}\\
			&=\theta^*_{\widetilde{B}} \big( \big(\xi_1 (\overline{a}_1 ), \xi_1 (\overline{b}_1 ) \big), \big(\xi_1 (\overline{a}_2 ), \xi_1 (\overline{b}_2 ) \big) \big), \text{ using Lemma \eqref{annihilator lemma}(2)}\\
			&= \theta^*_{\widetilde{B}}\, (\widetilde{\xi_1} \times \widetilde{\xi_1} ) \big((\overline{a}_1,\overline{b}_1 ), (\overline{a}_2,\overline{b}_2 ) \big).
		\end{align*}
		This completes the proof of the theorem.
	\end{proof}

	{\footnotesize
    
    }
	
\end{document}